\newtheorem{thm}{Theorem}
\newtheorem{lemma}[thm]{Lemma}
\newtheorem{prop}[thm]{Proposition}
\newtheorem{assumption}{Assumption}
\DeclareMathOperator*{\argmin}{arg~min}
\algnewcommand\algorithmicinput{\textbf{INPUT:}}
\algnewcommand\INPUT{\item[\algorithmicinput]}
\algnewcommand\algorithmicoutput{\textbf{OUTPUT:}}
\algnewcommand\OUTPUT{\item[\algorithmicoutput]}
\date{\vspace{-5ex}}
\title{Spectral Analysis of High-dimensional Time Series}
\author[1]{Mark Fiecas}
\author[2]{Chenlei Leng}
\author[3]{Weidong Liu}
\author[4]{Yi Yu}
\affil[1]{\small School of Public Health, University of Minnesota}
\affil[2]{\small Department of Statistics, University of Warwick}
\affil[3]{\small Institute of Natural Sciences, Shanghai Jiao Tong University}
\affil[4]{\small School of Mathematics, University of Bristol}
\begin{document}
\maketitle

\begin{abstract}
A useful approach for analysing multiple time series is via characterising their spectral density matrix as the frequency domain analog of the covariance matrix.  When the dimension of the time series is large compared to their length, regularisation based methods can overcome the curse of dimensionality, but the existing ones lack theoretical justification.  This paper develops the first non-asymptotic result for characterising the difference between the sample and population versions of the spectral density matrix, allowing one to justify a range of high-dimensional models for analysing time series.  As a concrete example, we apply this result to establish the convergence of the smoothed periodogram estimators and sparse estimators of the inverse of spectral density matrices, namely precision matrices.  These results, novel in the frequency domain time series analysis, are corroborated by simulations and an analysis of the Google Flu Trends data.\\


\textbf{Keywords}: Frequency domain time series; High dimension; Functional dependency; Smoothed periodogram; Sparse precision matrix estimation.

\end{abstract}
\section{Introduction}\label{sec-intro}
Spectral density matrices play a large role in characterising the second order properties of multivariate time series. The spectral density matrix is the frequency domain analog of the covariance matrix, and describes the variance in each dimension or the covariance between dimensions that can be attributed to oscillations in the data within certain frequencies. Just as how partial correlations between the dimensions can be extracted as a function of the inverse of a covariance matrix, conditional relationships attributable to variations in the oscillations of the data can be obtained from the inverse of the spectral density matrix \citep{Dahlhaus00}. Thus, it is necessary to obtain a positive-definite estimate of the spectral density matrix, but this can be challenging whenever the dimensionality of the time series is relatively large compared to the length of the time series.

There have only been a few papers dedicated to developing rigorous theory in the context of a high-dimensional time series.  For instance, \cite{Davis16} and \cite{Guo16} both developed methods to give sparse estimates of the parameters of a vector autoregressive (VAR) model, and \cite{Basu15} studied the theoretical properties of regularised estimates of the parameters of a broad class of time series models. These recent works, however, focused primarily on time series models in the time domain, yet, there remains a critical gap in theoretical investigations on frequency domain methodologies. Nevertheless, many authors have been proposing frequency domain methodologies despite the lack of theoretical justifications.  For instance, \cite{Fiecas11}, \cite{Fiecas14}, and \cite{Schneider-Luftman16} developed variations of a shrinkage framework developed by \cite{Bohm:vonSachs:2009} for data-driven $\ell_2$-penalised estimation, and applied their ideas to neuroimaging data; motivated by gene regulatory networks as well as econometrics, \cite{Jung2015} developed a graphical lasso approach for estimating a graphical model for high-dimensional time series data in the spectral domain; \cite{BarigozziHallin2017} utilised a dynamic factor model to study the volatility of high dimensional financial series.  This stream of methodological 
papers have deep roots in the application areas, where we are aware of the demand of estimating high-dimensional spectral density matrices and its inverse.

The aim of this paper is to study the theoretical behaviours of estimators of the spectral density matrix and its inverse in high dimension. We summarise the main contributions of this paper.

First, it is arguable that the most important ingredient in high-dimensional statistical inference, in contrast with classical ones, is the fixed-sample results.  To be specific, in order to allow for high dimensions, a common practice is to exploit concentration inequalities, then to provide fixed-sample results to control the differences between the sample and the population versions, and finally to use union bound arguments to derive desirable results.  To the best of our knowledge, this paper is the first to show such fixed-sample results on the error control of the smoothed periodogram matrices in \Cref{thm-key}.  This is a challenging task, and the main difficulty in developing such methods comes from the fact that the text book results on frequency domain time series are limited to asymptotic results only \citep{Brillinger1981,BrockwellDavis2006}.

 Second, once the fixed-sample results are established, a wide range of high-dimensional statistics methods are ready to be justified, including estimation, prediction and inference tools.  In this paper, we use the sparse precision matrix estimation problem as an example, and demonstrate the theoretical (see \Cref{thm-main}) and numerical performances of applying the constrained $\ell_1$-minimisation for inverse matrix estimation \citep[\textsc{clime},][]{CaiEtal2011} to spectral analysis of time series data.  We would like to mention that the possible applications of \Cref{thm-key} are way beyond \Cref{thm-main}, while we use the sparse precision matrix estimation as an example.

The rest of this paper is organised as follows.  In \Cref{sec-method}, we explain the methodology used in this paper.  The theoretical results are collected in \Cref{sec-theory}, including two main theorems.  The technical details thereof can be found in the Appendix.  In \Cref{sec-numerical}, we demonstrate the numerical performances of our proposed methods, via simulations and real data analysis.  


\section{Methodology}\label{sec-method}

\subsection{Framework and notation}\label{sec-framework}

In order to study the theoretical performances, we adopt the functional dependency framework  \citep{Wu2005}.  Let $\boldsymbol{X}_t=(X_{t,1},\ldots,X_{t,p})^{\top} \in \mathbb{R}^p$ be centred random vectors  satisfying
	\begin{eqnarray}\label{frame1}
	\boldsymbol{X}_t = G(\ldots, \boldsymbol{e}_{t-1}, \boldsymbol{e}_{t}) =: G(\mathcal{F}_{t}),
	\end{eqnarray}
	where $\boldsymbol{e}_{t}$ are i.i.d. random vectors, $\mathcal{F}_{t} = (\ldots, \boldsymbol{e}_{t-1}, \boldsymbol{e}_{t})$, and $G(\mathcal{F}_{t}) = (g_1(\mathcal{F}_{t}), \ldots, g_{p}(\mathcal{F}_{t}))^{\top}$.  With this notation, we have $X_{t,i}=g_{i}(\mathcal{F}_{t})$ for each $i \in \{1, \ldots, p\}$.   Let $\tilde{\boldsymbol{e}}_{0}, \{\boldsymbol{e}_{t}, t\in \mathbb{Z}\}$ be i.i.d. random vectors.  For $t = 1, 2, \ldots$, define $\tilde{\mathcal{F}}_{t}=(\ldots, \boldsymbol{e}_{-1},\tilde{\boldsymbol{e}}_{0}, \boldsymbol{e}_{1}, \ldots, \boldsymbol{e}_{t})$, i.e. replace $\boldsymbol{e}_{0}$ with $\tilde{\boldsymbol{e}}_{0}$ in $\mathcal{F}_{t}$.  Define $X'_{t,i} = g_{i}(\tilde{\mathcal{F}}_{t})$ and 
 	\begin{equation}\label{eq-theta-intro}
	\theta_{t, i} = \bigl(\mathbb{E} |X_{t,i} - X'_{t,i}|^2\bigr)^{1/2},
	\end{equation}
	which is used as a dependency measure.  It has been pointed out in \cite{Wu2005} that a large family of common time series models can be characterised by imposing proper conditions on \eqref{eq-theta-intro}.
	

For rest of this paper, for any vector $\boldsymbol{v} = (v_1, \ldots, v_m)^{\top} \in \mathbb{C}^m$, let $\|\boldsymbol{v}\|_q := \bigl(\sum_{i=1}^m |v_i|^q \bigr)^{1/q}$ be the $\ell_q$-norm of $\boldsymbol{v}$; for any matrix $A = (A_{ij})_{i,j=1}^p \in \mathbb{C}^{p\times p}$, let $\|A\|_{w} = \sup_{\boldsymbol{v}:\, \|\boldsymbol{v}\|_w \leq 1} \|A\boldsymbol{v}\|_w$.  We use the sparsity definition in \cite{CaiEtal2016} to characterise the sparsity of precision matrices, i.e. let the parameter space $\mathcal{G}_q(c_{n, p}, M_{n, p})$ be denoted by
	\begin{equation}\label{eq-sparsity-definition}
		\mathcal{G}_q(c_{n, p}, M_{n, p}) := \left\{\begin{array}{c}
			\Theta = (\Theta_{ij})_{i, j = 1}^p: \, \max_{j = 1, \ldots, p} \sum_{i=1}^p |\Theta_{ij}|^q \leq c_{n, p},\\
			\|\Theta\|_1 \leq M_{n, p}, \, \lambda_{\max}(\Theta)/\lambda_{\min}(\Theta) \leq M_1, 
			\end{array} \right\},	
	\end{equation}
	where $0 \leq q < 1$, $c_{n, p}$ and $M_{n, p}$ are potentially diverging as $n$ and $p$ grow.

\subsection{The Sparse Inverse Periodogram Estimator}
In the following sections, we define the spectral density matrix, introduce the estimators thereof, and propose a method to estimate the inverse of the high-dimensional spectral density matrix for any arbitrary frequency.  We convert the time domain time series data into frequency domain using the discrete Fourier transform, which results in the data being a complex-valued vector.  Motivated by the properties of the complex-valued normal distribution, we separate the real and imaginary parts of the transformed data and double the dimension of the vectors.  At each frequency point, we adopt a moving window and construct the estimator of the inverse of the periodogram, based on the \textsc{clime} estimator proposed in \cite{CaiEtal2011}.  The detailed algorithm is in Algorithm~\ref{alg-1}.  
We will first state our algorithm, and explain the details regarding the smoothed periodogram and its inverse in Sections~\ref{sec-21} and \ref{sec-22}, respectively.

\begin{algorithm}[htbp]
\begin{algorithmic}
\Procedure{SIPE}{$\{\boldsymbol{X}_i \in \mathbb{R}^p\}_{i=1}^n$, $h$}	
	\For{$j \in \{-\lfloor (n-1)/2 \rfloor, \ldots, \lfloor n/2\rfloor \}$}
		\State $\omega_j \leftarrow j/n$
		\State $\boldsymbol{d}(\omega_j) \leftarrow \sum_{t=1}^n\boldsymbol{X}_t\exp(- \imath 2\pi\omega_j t)$ \Comment{$\boldsymbol{d}(\omega_j) \in \mathbb{C}^p$}
	\EndFor
	\State $D^{\mathrm{C}} \leftarrow \big(\boldsymbol{d}(\omega_{-\lfloor (n-1)/2}), \ldots, \boldsymbol{d}(\omega_{\lfloor n/2\rfloor})\big)^{\top}$ \Comment{$D^{\mathrm{C}} \in \mathbb{C}^{n \times p}$}
	\State $D \leftarrow (\Re(D^{\mathrm{C}}), \Im(D^{\mathrm{C}}))$ \Comment{$D \in \mathbb{R}^{n \times 2p}$}
	\For{$j \in \{-\lfloor (n-1)/2 \rfloor, \ldots, \lfloor n/2\rfloor \}$}
		\State ind $\leftarrow (j - h, j -h+1, \ldots, j+ h) \mod (n+1)$
		\State $\left(\begin{array}{cc}
		 A_1 & B_1 \\
		 B_2 & A_2 
		 \end{array}\right) \leftarrow \textsc{clime}(D_{\mathrm{ind}})$ \Comment{$D_{\mathrm{ind}} \in \mathbb{R}^{|\mathrm{ind}| \times 2p}$, $A_1, A_2, B_1, B_2 \in \mathbb{R}^{p \times p}$}
		\State $\Theta_j \leftarrow (A_1 + A_2)/2 + \imath (B_1 - B_2)/2$ \Comment{$\Theta_j \in \mathbb{C}^{p \times p}$}
	\EndFor
	\State \textbf{return} $\{\Theta_i \in \mathbb{C}^{p \times p}\}_{i=1}^n$.
\EndProcedure
\caption{Sparse Inverse Periodogram Estimation. }\label{alg-1}	
\end{algorithmic}
\end{algorithm}

In \Cref{alg-1}, $\Re(\cdot)$ and $\Im(\cdot)$ denote the real and imaginary parts of an object, respectively, and preserve the same format of the object.  In our case, the input $D^{\mathrm{C}} \in \mathbb{R}^{n \times p}$, and therefore $\Re(D^{\mathrm{C}}), \Im(D^{\mathrm{C}}) \in \mathbb{R}^{n \times p}$.  As for the algorithm \textsc{clime}, see \Cref{sec-22} and \cite{CaiEtal2011} for details.

\subsection{Real-valued smoothed periodogram estimators}\label{sec-21}

Let $\{\boldsymbol{X}_t\}_{t \in \mathbb{Z}}$ be a $p$-variate mean zero  stationary real-valued time series with autocovariance matrix function $\Gamma(h) = \mathrm{Cov}(\boldsymbol{X}_t, \boldsymbol{X}_{t+h}) = \mathbb{E}(\boldsymbol{X}_t\boldsymbol{X}^{\top}_{t+h})$, for $h \in \mathbb{Z}$.  Under these conditions, $\{\boldsymbol{X}_t\}$ has a continuous spectral density matrix given by
	\[
		f(\omega) = \sum_{h\in\mathbb{Z}}\Gamma(h)\exp(-\imath 2\pi\omega h), \quad \omega \in [-1/2, 1/2].
	\]
	Given an interval of the whole time series, namely $\{\boldsymbol{X}_t\}_{t = 1, \ldots, n}$, the periodogram defined at the Fourier frequencies $\{\omega_j = j/n, \, -\lfloor(n-1)/2 \rfloor \leq j \leq \lfloor n/2\rfloor\}$ by $P_n(\omega_j) = n^{-1}\boldsymbol{d}(\omega_j)\boldsymbol{d}^*(\omega_j)$, where $\boldsymbol{d}(\omega_j) = \sum_{t=1}^n\boldsymbol{X}_t\exp(-\imath 2\pi\omega_j t)$, and for any complex-valued vector $\boldsymbol{v}$, $\boldsymbol{v}^*$ denotes $\overline{\boldsymbol{v}}^{\top}$, i.e. the conjugate transpose of $\boldsymbol{v}$.

When $p = 1$, it is known that $\mathbb{E}(P_n(\omega))$ converges uniformly to $f(\omega)$ on $[-1/2, 1/2]$ \citep[e.g.][Proposition~10.3.1]{BrockwellDavis2006}, but $P_n(\omega)$ does not converge in probability to $f(\omega)$ as $T \to \infty$ \citep[e.g.][Theorem~10.3.2]{BrockwellDavis2006}.  A common remedy is to use the smoothed periodogram, given by
	\[
		\widetilde{f}_{n}(\omega_j) = \frac{1}{2M_n+1}\sum_{|k|\leq M_n}P_n(\omega_{j+k}).
	\]
	When $p=1$, it can be shown that if $M_n \to \infty$ and $M_n/n \to 0$ as $n \to \infty$, $\widetilde{f}_n(\omega_j)$ is a consistent estimator of $f(\omega_j)$.

When $p \to \infty$ as $T \to \infty$, we are interested in the conditional dependence structures of the pairs of coordinate, namely by defining $\Theta(\omega) = \bigl(f(\omega)\bigr)^{-1}$, our goal now is to provide a {\it sparse} estimator of $\Theta(\omega)$ with desirable large-sample properties.  Note that both $f(\omega)$ and $\Theta(\omega)$ are complex-valued matrices.  To make the following discussion easier, we first transform them into real-valued matrices.

For any $j = -\lfloor (n-1)/2, \ldots, \lfloor n/2\rfloor$ and $\omega_j = j/n$, since
\begin{align*}
\widetilde{f}_n(\omega_j) = &\frac{1}{(2M_n+1)n}\sum_{|k| \leq -M_n}\boldsymbol{d}(\omega_{j+k})\bigl(\boldsymbol{d}(\omega_{j+k})\bigr)^* \\
= & \frac{1}{(2M_n + 1)n}\sum_{|k| \leq M_n}\biggl\{\biggl(\sum_{t = 1}^n\boldsymbol{X}_t\cos(2\pi\omega_{j+k}t)\biggr)\biggl(\sum_{t = 1}^n\boldsymbol{X}_t\cos(2\pi\omega_{j+k}t)\biggr)^{\top}\\
& \hspace{2cm} + \biggl(\sum_{t = 1}^n\boldsymbol{X}_t\sin(2\pi\omega_{j+k}t)\biggr)\biggl(\sum_{t = 1}^n\boldsymbol{X}_t\sin(2\pi\omega_{j+k}t)\biggr)^{\top}\biggr\}\\
& \hspace{0.5cm} + \imath \frac{1}{(2M_n + 1)n}\sum_{|k| \leq M_n}\biggl\{\biggl(\sum_{t = 1}^n\boldsymbol{X}_t\cos(2\pi\omega_{j+k}t)\biggr)\biggl(\sum_{t = 1}^n\boldsymbol{X}_t\sin(2\pi\omega_{j+k}t)\biggr)^{\top}\\
& \hspace{2cm} - \biggl(\sum_{t = 1}^n\boldsymbol{X}_t\sin(2\pi\omega_{j+k}t)\biggr)\biggl(\sum_{t = 1}^n\boldsymbol{X}_t\cos(2\pi\omega_{j+k}t)\biggr)^{\top}\biggr\}\\
=: & A_j + \imath B_j,
\end{align*}
it follows from Lemma~\ref{lem-inverse} in the Appendix, that $\Theta(\omega_j)$ has the form $\widetilde{A}_j + \imath \widetilde{B}_j$, where $\widetilde{A}_j$ and $\widetilde{B}_j$ satisfy
\[
\left(\begin{array}{cc}
	A_j & -B_j \\
	B_j & A_j \\
\end{array}
\right)\left(\begin{array}{cc}
	\widetilde{A}_j & -\widetilde{B}_j \\
	\widetilde{B}_j & \widetilde{A}_j
\end{array}
\right) = I.
\]
Therefore, our problem is transformed to finding the inverse of $\biggl(\begin{array}{cc}A_j & -B_j \\ B_j & A_j \end{array}\biggr)$. 


Therefore, for any $j = -\lfloor (n-1)/2, \ldots, \lfloor n/2\rfloor$ and $\omega_j = j/n$, instead of directly studying $\widetilde{f}_n(\omega_j)$, our targets are now
\[
\Sigma_j := \left(\begin{array}{cc}
	\mathrm{Re}f(\omega_j) & \mathrm{Im}f(\omega_j) \\
	-\mathrm{Im}f(\omega_j) & \mathrm{Re}f(\omega_j)
\end{array}
\right)
\]
and sample version 
\begin{align*}
\widehat{\Sigma}_j = \frac{1}{(2M_n+1)n}\sum_{|k|\leq M_n}\sum_{s=1}^{n}\sum_{\ell=1}^n\left(\begin{array}{cc}
		\boldsymbol{X}_s\boldsymbol{X}_{\ell}^{\top}\cos(2\pi\omega_{j+k}(s-l)) & \boldsymbol{X}_s\boldsymbol{X}_{\ell}^{\top}\sin(2\pi\omega_{j+k}(s-l))\\
		- \boldsymbol{X}_s\boldsymbol{X}_{\ell}^{\top}\sin(2\pi\omega_{j+k}(s-l)) & \boldsymbol{X}_s\boldsymbol{X}_{\ell}^{\top}\cos(2\pi\omega_{j+k}(s-l))
	\end{array}\right).
\end{align*}

\subsection{Penalised precision matrices at every frequency point}\label{sec-22}

Now we have a sequence of expanded but real-valued smoothed periodogram matrices at every frequency point, i.e. $\bigl\{\widehat{\Sigma}_j, j = -\lfloor (n-1)/2 \rfloor, \ldots, \lfloor n/2 \rfloor\bigr\}$.  As for each one, our goal is to obtain a sparse inverse matrix.  In the last decade, a number of statistical methods have been proposed to achieve this goal, including graphical Lasso \citep[e.g.][]{YuanLin2007}, node-wise regression \citep[e.g.][]{MeinshausenBuhlmann2006}, constrained $\ell_1$-minimisation for inverse matrix estimation \citep[\textsc{clime}][]{CaiEtal2011}, adaptive \textsc{clime}\citep{CaiEtal2016} and the innovated scalable efficient estimation \citep{FanLv2016}, among others.  

In this paper, we do not intend to compare different sparse precision matrix estimation methods, but to apply the \textsc{clime} method for the sake of simplicity in technical details, and to provide with an example for consistent sparse precision matrix estimation in the high-dimensional frequency domain time series context.  For details of the \textsc{clime} method, we refer readers to \cite{CaiEtal2011}, which studies the inverse of the covariance matrices, and in which the sparse precision matrix estimators are obtained based on the sample covariance matrices of i.i.d. random vectors.  For completeness, we include the definition of the estimators.

For each $j \in \{ -\lfloor (n-1)/2 \rfloor, \ldots, \lfloor n/2 \rfloor \}$, let
	\begin{align}\label{eq-opt}
	\widehat{\Theta}_j = (\tilde{\Theta}_{j, kl}) = \argmin_{\|\widehat{\Sigma}_j \Theta_j - I\|_{\infty} \leq \lambda,\, \Theta_j \in \mathbb{R}^{2p \times 2p}} \|\Theta_j\|_1.
	\end{align}
	In practice, one can also symmetrise the estimator and obtain
	\[
	\widetilde{\Theta}_j = (\tilde{\Theta}_{j, kl}),
	\]
	where
	\[
	\tilde{\Theta}_{j, kl} = \tilde{\Theta}_{j, lk} = \hat{\Theta}_{j, kl} \mathbbm{1}\{\hat{\Theta}_{j, kl} \leq \hat{\Theta}_{j, lk}\} + \hat{\Theta}_{j, lk} \mathbbm{1}\{\hat{\Theta}_{j, lk} \leq \tilde{\Theta}_{j, kl}\}.
	\]

\section{Theory}\label{sec-theory}


In \Cref{thm-key}, we will provide fixed-sample results for the spectral density matrix of a high-dimensional time series, in the form of an entry-wise error control between the smoothed periodogram estimator and the spectral density matrix.  This is a fundamental step in proving many different types of high-dimensional statistical problems.  To theoretically justify the sparse precision matrix estimator we proposed in \Cref{sec-method}, but more importantly, to demonstrate the power of \Cref{thm-key}, in \Cref{thm-main}, we show the uniform consistency of the sequence of precision matrices $\{\widehat{\Theta}_j\}$. 

As pointed out in  Section~\ref{sec-framework}, in order to provide the desired results, we are using the functional dependency framework described by \eqref{frame1} and \eqref{eq-theta-intro}.  To further characterise the dependency, we introduce Assumption~\ref{assump-c1}.  This is also used in \cite{ChenEtal2013}, and we refer interested readers there for examples.

\begin{assumption}\label{assump-c1}	
	Assume for some constant $0<\rho<1$, 
	\[
	\max_{i = 1, \ldots, p}\theta_{t, i} = O(\rho^{t}),
	\] 
	and for some constant $\kappa>0$ and $C_0 > 0$, 
	\[
	\max_{1\leq i\leq p}\mathbb{E}(\exp\{\kappa |X_{0,i}|\}) \leq C_{0}.
	\] 
\end{assumption}

Note that the fixed-sample result holds for all dimensionality, but in order to achieve desirable consistency results, we need extra conditions on the dimensionality of the data, which is detailed in \Cref{assump-c2}.  Note that we can actually handle a super-polynomial rate of $n$ for $p$, but in order to be specific, we assume $p$ is of any polynomial rate of $n$ as described in Assumption~\ref{assump-c2}.

\begin{assumption}\label{assump-c2} Assume:
	\begin{itemize}
	\item there exists constant $c > 0$ such that $p\leq cn^{r}$ for some $r>0$; 
	\item $M_n/T \to 0$, and there exists a constant $\delta > 0$ such that $M_n^{-1/2}(n/M_n)^{\delta} \to 0$.
	\end{itemize}
\end{assumption}

\Cref{assump-c3} is only  used to achieve the consistency of the sparse precision matrix estimators in \Cref{thm-main}.   Under \Cref{assump-c2}, \Cref{eq-assump-c3} holds even when the $\ell_1$- and $\ell_q$-norms of $\Theta_j$, $j = 1, \ldots, n$, diverge, as $n$ grows unbounded.  Therefore, \Cref{assump-c3} is a reasonably weak condition.

\begin{assumption}\label{assump-c3}
	Recall the parameter space $\mathcal{G}_q(c_{n, p}, M_{n, p})$ defined in \eqref{eq-sparsity-definition}.  Assume for $q \in [0, 1)$ the following holds:
		\begin{equation}\label{eq-assump-c3}
			M_{n, p}^{1-q}\left(\frac{M_{n, p}M_n}{n} + \frac{M_{n, p}n^{\delta}}{M_n^{1/2 + \delta}}\right)^{1-q}c_{n, p} = o(1).
		\end{equation}
\end{assumption}

\begin{thm}[Smoothed periodogram]\label{thm-key}
	Under Assumption~\ref{assump-c1}, there exists a constant $C > 0$ depending only on $\kappa$ and $C_0$ such that for any $\delta > 0$ and $H > 0$ the following holds
	\begin{align}\label{eq-thm-1-fixed}
		\mathbb{P}\left\{\sup_{k \in \{-\lfloor (n-1)/2 \rfloor, \ldots, \lfloor n/2 \rfloor\}} \max_{i, j = 1, \ldots, p} |\widetilde{f}_{ij,n}(\omega_k)-f_{ij}(\omega_k)| > C M_{n}/n+8(n/M_{n})^{1/2+\delta}n^{-1/2} \right\} \leq p^2n^{-H}.
	\end{align}

If we further assume Assumption~\ref{assump-c2}, then we have
	\[
		\sup_{k \in \{-\lfloor (n-1)/2 \rfloor, \ldots, \lfloor n/2 \rfloor\}} \max_{i, j = 1, \ldots, p} |\widetilde{f}_{ij,n}(\omega_k)-f_{ij}(\omega_k)|  = o_P(1).
	\]
\end{thm}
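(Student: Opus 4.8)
The plan is to control, at each Fourier frequency $\omega_k$ and each coordinate pair $(i,j)$, the error $|\widetilde f_{ij,n}(\omega_k)-f_{ij}(\omega_k)|$ by the sum of a deterministic bias $|\mathbb{E}\widetilde f_{ij,n}(\omega_k)-f_{ij}(\omega_k)|$ and a stochastic fluctuation $|\widetilde f_{ij,n}(\omega_k)-\mathbb{E}\widetilde f_{ij,n}(\omega_k)|$. I would first show the bias is $O(M_n/n)$ uniformly over $k,i,j$ (this supplies the first term in the threshold), and then show that for each fixed triple $(k,i,j)$ the fluctuation exceeds $8(n/M_n)^{1/2+\delta}n^{-1/2}=8M_n^{-1/2}(n/M_n)^{\delta}$ with probability at most $n^{-H-3}$; since there are at most $(n+1)p^2$ triples, a union bound then gives \eqref{eq-thm-1-fixed}. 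The $o_P(1)$ statement follows immediately: under \Cref{assump-c2} both $M_n/n$ and $M_n^{-1/2}(n/M_n)^{\delta}$ tend to $0$, and $p^2n^{-H}\to0$ once $H$ is taken larger than $2r$, recalling $p\le cn^{r}$.

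For the bias, the starting point is the representation $\widetilde f_{ij,n}(\omega_k)=\sum_{|h|<n}\widehat\Gamma_{ij}(h)\,D_{M_n}(h)\,e^{-\imath 2\pi\omega_k h}$, where $\widehat\Gamma_{ij}(h)=n^{-1}\sum_t X_{t,i}X_{t+h,j}$ and $D_{M_n}(h)=(2M_n+1)^{-1}\sum_{|l|\le M_n}e^{-\imath 2\pi lh/n}$ is a Dirichlet-type kernel with $|D_{M_n}(h)|\le1$ and $|D_{M_n}(h)-1|\le 2\pi M_n|h|/n$. Using $\mathbb{E}\widehat\Gamma_{ij}(h)=(1-|h|/n)\Gamma_{ij}(h)$ for $|h|<n$, the identity $f_{ij}(\omega_k)=\sum_h\Gamma_{ij}(h)e^{-\imath 2\pi\omega_k h}$, and the consequence of \Cref{assump-c1} that the autocovariances decay geometrically, $\max_{i,j}|\Gamma_{ij}(h)|\le C_1\rho_1^{|h|}$ for some $\rho_1\in(0,1)$, one obtains $\sup_k\max_{i,j}|\mathbb{E}\widetilde f_{ij,n}(\omega_k)-f_{ij}(\omega_k)|\le C_1\bigl((2\pi M_n+1)/n\bigr)\sum_h|h|\rho_1^{|h|}+O(\rho_1^{n})=O(M_n/n)$.

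For the fluctuation, after fixing $(k,i,j)$ I would reduce to a quadratic form. By polarisation, $\Re\widetilde f_{ij,n}(\omega_k)$ and $\Im\widetilde f_{ij,n}(\omega_k)$ are fixed linear combinations of univariate smoothed periodograms $\widetilde f^{(a)}_n(\omega_k)=(2M_n+1)^{-1}\sum_{|l|\le M_n}n^{-1}|d_a(\omega_{k+l})|^2$ of the four auxiliary series $X_{t,i}\pm X_{t,j}$ and $X_{t,i}\pm\imath X_{t,j}$, so it suffices to bound each $\widetilde f^{(a)}_n(\omega_k)$ around its mean. Separating $d_a$ into cosine and sine parts writes $\widetilde f^{(a)}_n(\omega_k)=\boldsymbol{Z}^{\top}A\boldsymbol{Z}$, where $\boldsymbol{Z}$ collects the $n$ (or $2n$) real coordinates of the auxiliary series and $A=A(k,M_n)$ is positive semidefinite with range spanned by the cosine/sine vectors at the $2M_n+1$ distinct Fourier frequencies $\{\omega_{k+l}:|l|\le M_n\}$. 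Since those vectors are, up to a negligible set of exceptional pairs near $0$ and $\pm1/2$, mutually orthogonal with squared norm of order $n$, one has $\mathrm{rank}(A)\asymp M_n$ and $\|A\|_{\mathrm{op}}\asymp1/M_n$, while $\mathrm{Cov}(\boldsymbol{Z})$ is a (block) Toeplitz matrix with operator norm at most $\sup_\omega\{f_{ii}(\omega)+f_{jj}(\omega)\}=O(1)$ uniformly in $i,j$. This near-orthogonality is exactly what yields the $M_n^{-1/2}$ rate: it makes the $2M_n+1$ periodogram ordinates essentially uncorrelated (indeed exactly so for an $m$-dependent sequence, since $\sum_{t=1}^n e^{-\imath 2\pi(l-l')t/n}=0$ for $l\ne l'$), so their average is a quadratic form of effective rank $M_n$ and spectral size $1/M_n$. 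I would then establish a Hanson--Wright-type tail bound for $\boldsymbol{Z}^{\top}A\boldsymbol{Z}$ by (i) truncating each coordinate of $\boldsymbol{Z}$ at level $\asymp\log n$, the truncation error being controlled by the exponential-moment bound of \Cref{assump-c1} together with a union bound over $t$; (ii) replacing $\boldsymbol{X}_t=G(\mathcal{F}_t)$ by its $m$-dependent coupling with $m\asymp\log n$, whose $L^2$ error is $O(\rho^m)$ and hence negligible by the geometric decay of $\theta_{t,i}$; and (iii) for the resulting bounded, $m$-dependent quadratic form, peeling off the diagonal part (a plain sample second moment, concentrating at the harmless rate $n^{-1/2}$), partitioning the time axis into alternating large and small blocks of widths $\gg m$ and $m$ so that the large blocks are independent, decoupling the between-block bilinear form, and applying Bernstein's inequality conditionally. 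This should give
\[
\mathbb{P}\bigl(|\boldsymbol{Z}^{\top}A\boldsymbol{Z}-\mathbb{E}\,\boldsymbol{Z}^{\top}A\boldsymbol{Z}|>t\bigr)\le 2\exp\bigl(-c\min\{M_nt^2,\;M_nt/(\log n)^{c'}\}\bigr)+n^{-H-4},
\]
with $c,c'$ depending only on $\kappa$ and $C_0$; taking $t=8M_n^{-1/2}(n/M_n)^{\delta}$ makes $M_nt^2=64(n/M_n)^{2\delta}$, which grows as a power of $n/M_n$, so the single-triple probability is at most $n^{-H-3}$ for all large $n$, and the union bound finishes the argument.

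The step I expect to be hardest is (iii) together with the bookkeeping that yields $\|A\|_{\mathrm{op}}\asymp1/M_n$ and $\mathrm{rank}(A)\asymp M_n$: one must prove that $\boldsymbol{Z}^{\top}A\boldsymbol{Z}$ concentrates at scale $M_n^{-1/2}$ rather than at the much worse scale one would get by bounding the non--band-limited Dirichlet kernel $D_{M_n}(\cdot)$ in absolute value, which means carrying the near-orthogonality of the Fourier vectors all the way through the truncation, coupling, decoupling and blocking steps without degrading the polylogarithmic control of the sub-exponential scale. By contrast, the bias estimate, the polarisation reduction, and the closing union bound are routine once \Cref{assump-c1} is available.
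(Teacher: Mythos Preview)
Your plan is sound and shares the paper's overall architecture: bias--variance split, $m$-dependent approximation with $m\asymp(\log n)^2$, truncation at a polylog level, and exploitation of the Dirichlet-kernel structure to extract the $M_n^{-1/2}$ rate. The substantive divergence is in the concentration step. The paper does \emph{not} build a Hanson--Wright inequality from scratch via blocking and decoupling; instead, after the truncation/$m$-dependent reduction it invokes Proposition~3 of \cite{LiuWu2010} directly, a Fuk--Nagaev/martingale-type tail bound for quadratic forms $n^{-1}\sum_{t>l}\hat X_{t,i}\hat X_{l,j}\,a_{t-l}(\omega)$ of $m$-dependent variables. The key input there is the closed form $a_k(\omega)=e^{-\imath 2\pi k\omega}\sin(2\pi(M_n+\tfrac12)k/n)/((2M_n+1)\sin(\pi k/n))$ and the resulting bound $\sum_{k=1}^n|a_k(\omega)|^2=O((n/M_n)^2\wedge n)$, which is exactly your $\|A\|_F^2\asymp M_n^{-1}$ recast at the kernel level. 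The paper also skips polarisation and instead splits the periodogram into the off-diagonal bilinear sums $P^{(1)},P^{(2)}$ and the diagonal $P^{(3)}$ (your ``peeling off the diagonal''), and handles the bias by citing a modification of Theorem~10.4.1 in \cite{BrockwellDavis2006} rather than your explicit $|D_{M_n}(h)-1|\le 2\pi M_n|h|/n$ computation. Your route is more self-contained and makes the rank/operator-norm source of the $M_n^{-1/2}$ rate transparent; the paper's route is considerably shorter because \cite{LiuWu2010} already packages the hard step~(iii) you flagged. One small correction: the Gaussian arm of your Hanson--Wright display should also carry a polylogarithmic factor from the truncation level $B\asymp(\log n)^2$ (i.e.\ $M_nt^2/(\log n)^{c'}$ rather than $M_nt^2$), but this is harmless since it is absorbed by the $(n/M_n)^{\delta}$ slack.
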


The fixed-sample result in \eqref{eq-thm-1-fixed} holds for any choices of sample size $n$, dimensionality $p$ and the smoothing window size $M_n$.  It holds in the functional dependency framework detailed in \Cref{assump-c1}, and provides an entry-wise error control of the smoothed periodogram and the spectral density matrix.  We adopt a union bound argument to handle the dimensionality and to provide a uniform result across the sampled frequency points.

It is worth mentioning that the probability upper bound allows for any $H > 0$, which allows for the dimensionality diverges at any arbitrary polynomial rate as the sample size diverges.  This is made explicit in \Cref{assump-c2}.

The detailed proof of \Cref{thm-key} is in the Appendix.  Here, we briefly outline the sketch of the proof.  We start with a fixed frequency point and a fixed entry in the matrix.  In order to bound the errors between the smoothed periodogram matrix $\widetilde{f}$ and the spectral density matrix $f$, we introduce a series of instrumental quantities, including an $m$-dependent series using conditional expectations, its truncated version which is truncated in magnitude by $(\log(n))^2$, and a centred version by subtracting the unconditional expectations.  The majority of the efforts are therefore dedicated to bound the differences of all these different quantities.  Applying triangle inequality yields desirable results for a fixed frequency point and a fixed entry in the matrix.  Finally, we apply a union bound argument to obtain \eqref{eq-thm-1-fixed}.

\begin{prop}\label{thm-main}
	Under Assumptions~\ref{assump-c1} and the parameter space defined in \eqref{eq-sparsity-definition}, for a constant $\delta > 0$, any $w \in [1, \infty]$ and
	\[
		\lambda \asymp \frac{M_{n, p}M_n}{n} + \frac{M_{n, p}n^{\delta}}{M_n^{1/2 + \delta}}, 
	\]
	we have for a sufficiently large constant $C > 0$,
	\begin{equation}\label{eq-thm-main-1}
		\mathbb{P}\left(\sup_{j \in \{-\lfloor (n-1)/2, \ldots, \lfloor n/2 \rfloor\}} \|\hat{\Theta}(\omega_j) - \Theta(\omega_j)\|_w \leq CM_{n, p}^{1-q}\lambda^{1-q}c_{n, p}\right) \geq 1 - p^2n^{-H}.
	\end{equation}
	
	If we further assume Assumptions~\ref{assump-c2} and \ref{assump-c3}, then we have
	\[
		\sup_{j \in \{-\lfloor (n-1)/2, \ldots, \lfloor n/2 \rfloor\}} \|\hat{\Theta}(\omega_j) - \Theta(\omega_j)\|_w = o_P(1).
	\]
\end{prop}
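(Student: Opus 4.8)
The plan is to derive \Cref{thm-main} from \Cref{thm-key} by a purely deterministic argument, namely the \textsc{clime} error analysis of \cite{CaiEtal2011} transported to the real‑expanded smoothed periodogram. First I would pass from the complex $p\times p$ problem to the real $2p\times 2p$ one. Every entry of $\widehat{\Sigma}_j-\Sigma_j$ equals $\pm\Re\big(\widetilde f_{i\ell,n}(\omega_j)-f_{i\ell}(\omega_j)\big)$ or $\pm\Im\big(\widetilde f_{i\ell,n}(\omega_j)-f_{i\ell}(\omega_j)\big)$ for some $i,\ell$, so $\max_j\|\widehat{\Sigma}_j-\Sigma_j\|_\infty\le\max_j\max_{i,\ell}|\widetilde f_{i\ell,n}(\omega_j)-f_{i\ell}(\omega_j)|=:\tau_n$, and by \Cref{thm-key} the event $\mathcal E=\{\tau_n\le CM_n/n+8(n/M_n)^{1/2+\delta}n^{-1/2}\}$ has $\mathbb P(\mathcal E)\ge 1-p^2n^{-H}$; note $\tau_n$ is of order $\lambda/M_{n,p}$ for the stated $\lambda$. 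Since the complex $p\times p$ estimator $\widehat{\Theta}(\omega_j)$ and target $\Theta(\omega_j)$ are recovered (up to the averaging step in \Cref{alg-1}, which merely projects onto the block‑structured subspace) from the $2p\times 2p$ real \textsc{clime} estimator $\widehat{\Theta}_j$ of \eqref{eq-opt} and from $\Sigma_j^{-1}$, and the $\ell_w$ operator norms on the two representations agree up to a factor $\sqrt 2$, it suffices to bound $\|\widehat{\Theta}_j-\Sigma_j^{-1}\|_w$ uniformly in $j$ on $\mathcal E$.

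Second, I would record the structural facts: $f(\omega_j)$ is Hermitian positive definite (its inverse $\Theta(\omega_j)\in\mathcal G_q(c_{n,p},M_{n,p})$ has $\lambda_{\max}/\lambda_{\min}\le M_1$ and $\lambda_{\max}\le\|\Theta(\omega_j)\|_1\le M_{n,p}$), hence $\Sigma_j$ is symmetric positive definite with $\lambda_{\min}(\Sigma_j)\ge 1/M_{n,p}$; and $\Sigma_j^{-1}$ inherits, up to universal constants, the membership in $\mathcal G_q(c_{n,p},M_{n,p})$ of \eqref{eq-sparsity-definition}: its eigenvalues are those of $\Theta(\omega_j)$ with multiplicities doubled, $\|\Sigma_j^{-1}\|_1\le\sqrt 2\,\|\Theta(\omega_j)\|_1\lesssim M_{n,p}$, and $\max_k\sum_i|(\Sigma_j^{-1})_{ik}|^q\lesssim c_{n,p}$.

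Third, on $\mathcal E$ I would run the \textsc{clime} argument. Because $\|\widehat{\Sigma}_j-\Sigma_j\|_\infty\,\|\Sigma_j^{-1}\|_1\lesssim M_{n,p}\tau_n\le\lambda$ for the constant in $\lambda$ chosen large enough, $\Sigma_j^{-1}$ is feasible in \eqref{eq-opt}, so by column‑wise optimality of the \textsc{clime} solution $\|(\widehat{\Theta}_j)_{\cdot k}\|_1\le\|(\Sigma_j^{-1})_{\cdot k}\|_1$ for every $k$, in particular $\|\widehat{\Theta}_j\|_1\le\|\Sigma_j^{-1}\|_1$. Writing $\Sigma_j(\widehat{\Theta}_j-\Sigma_j^{-1})=(\Sigma_j-\widehat{\Sigma}_j)\widehat{\Theta}_j+(\widehat{\Sigma}_j\widehat{\Theta}_j-I)$ and using $\|\widehat{\Sigma}_j\widehat{\Theta}_j-I\|_\infty\le\lambda$ gives the entrywise bound $a_j:=\|\widehat{\Theta}_j-\Sigma_j^{-1}\|_\infty\le\|\Sigma_j^{-1}\|_1\big(\|\widehat{\Sigma}_j-\Sigma_j\|_\infty\|\widehat{\Theta}_j\|_1+\lambda\big)\le C M_{n,p}\lambda$. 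To convert this to the operator norm, for each column $k$ split $\sum_i|(\widehat{\Theta}_j-\Sigma_j^{-1})_{ik}|$ according to whether $|(\Sigma_j^{-1})_{ik}|>a_j$: on the first set the number of indices is $\le c_{n,p}a_j^{-q}$, contributing $\lesssim c_{n,p}a_j^{1-q}$; on the complementary set the $\Sigma_j^{-1}$‑part is $\le a_j^{1-q}\sum_i|(\Sigma_j^{-1})_{ik}|^q\lesssim c_{n,p}a_j^{1-q}$, and the $\widehat{\Theta}_j$‑part is handled by trading it back, via $\sum_i|(\widehat{\Theta}_j)_{ik}|\le\sum_i|(\Sigma_j^{-1})_{ik}|$, onto another $\lesssim c_{n,p}a_j^{1-q}$ term. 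Hence $\max_k\sum_i|(\widehat{\Theta}_j-\Sigma_j^{-1})_{ik}|\lesssim c_{n,p}a_j^{1-q}$, and since both matrices are symmetric, Riesz--Thorin interpolation between the $\ell_1$ and $\ell_\infty$ operator norms (which coincide for symmetric matrices) yields $\|\widehat{\Theta}_j-\Sigma_j^{-1}\|_w\lesssim c_{n,p}a_j^{1-q}\lesssim c_{n,p}(M_{n,p}\lambda)^{1-q}=c_{n,p}M_{n,p}^{1-q}\lambda^{1-q}$ for all $w\in[1,\infty]$, simultaneously for all $j$ on $\mathcal E$ — no further union bound is needed, since \Cref{thm-key} is already uniform over the frequency grid. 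This gives \eqref{eq-thm-main-1}.

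Finally, the $o_P(1)$ statement is immediate: with $\lambda\asymp M_{n,p}M_n/n+M_{n,p}n^{\delta}/M_n^{1/2+\delta}$, the bound $c_{n,p}M_{n,p}^{1-q}\lambda^{1-q}$ is exactly the left‑hand side of \eqref{eq-assump-c3}, so \Cref{assump-c3} forces it to be $o(1)$; \Cref{assump-c2} simultaneously makes $\tau_n\to0$ (legitimising the \textsc{clime} step, e.g.\ $\lambda=o(1)$) and makes $p^2n^{-H}\to0$ for $H$ chosen large, so the event $\mathcal E$ has probability tending to one. I expect the only genuinely delicate points to be (i) the bookkeeping in the complex‑to‑real reduction — checking that the sparsity level $c_{n,p}$, the $\ell_1$ bound $M_{n,p}$, and the conditioning of $\Theta(\omega_j)$ all survive the expansion with only universal‑constant losses — and (ii) the $\ell_q$‑ball‑to‑operator‑norm conversion above, where one must ensure the small‑coordinate contribution of $\widehat{\Theta}_j$ does not blow up; that is precisely what the column‑wise \textsc{clime} optimality $\|(\widehat{\Theta}_j)_{\cdot k}\|_1\le\|(\Sigma_j^{-1})_{\cdot k}\|_1$ is used for. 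Everything else is standard \textsc{clime} bookkeeping, the probabilistic content being entirely carried by \Cref{thm-key}.
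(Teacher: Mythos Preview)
Your proposal is correct and follows essentially the same route as the paper's own proof: both reduce the probabilistic content entirely to \Cref{thm-key}, show feasibility of the true precision matrix in \eqref{eq-opt} on the high-probability event, deduce the entrywise bound $\|\widehat{\Theta}_j-\Sigma_j^{-1}\|_\infty\lesssim M_{n,p}\lambda$, and then convert this to an operator-norm bound using the $\ell_q$-ball structure of $\mathcal G_q(c_{n,p},M_{n,p})$. The only differences are presentational: the paper simply cites Theorem~7.2 and Lemma~7.1 of \cite{CaiEtal2016} for the reduction to $w=1$ and for the $\ell_q$-ball-to-$\ell_1$-operator-norm conversion, whereas you spell out the Riesz--Thorin interpolation and the threshold-splitting argument explicitly; and you are more careful than the paper about the complex-to-real bookkeeping (checking that $c_{n,p}$, $M_{n,p}$ and the conditioning survive the $2p\times 2p$ expansion), which the paper leaves implicit.
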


\Cref{thm-main} is an application of \Cref{thm-key} on the sparse precision matrix estimation.  The proof is in fact straightforward based on \eqref{eq-thm-1-fixed} and the proof techniques developed in \cite{CaiEtal2011}.  Since it is built upon \Cref{thm-key}, we allow for the same flexibility that in \eqref{eq-thm-main-1}, $H$ is allowed to be any positive value, and therefore the dimensionality $p$ is allowed to be of any arbitrary order of the sample size $n$.






\section{Numerical Results}\label{sec-numerical}

\subsection{Simulations}

In this section, we verify our proposed methodology using simulated data. We consider multivariate time series having dimension $p = 10$ or $50$ with sample size $n = 200$ or $400$. These are challenging scenarios for spectral analysis because the amount of data available to estimate the spectral density matrix and its inverse is related to the smoothing span $2M_n + 1$ used to smooth the periodogram matrix, and not the length of the time series. In our simulations, we picked $M_n$ using the generalised cross-validation (GCV) criterion developed by \cite{Ombao01}.  Using this approach to pick $M_n$, we also construct the smoothed periodogram matrix $\tilde{f}_n(\omega)$ and calculate its inverse (whenever possible) and use these estimators in order to assess relative performance.

We investigated multiple scenarios in this study: we simulated from (1) a $p$-variate Gaussian white noise model, (2) a $p$-variate first-order vector autoregressive (VAR(1)) model, whose parameters we give below, and (3) a $p$-variate VAR(1) model whose conditional dependence structure between the dimensions is driven by a sparse precision matrix of the innovations.  

Setting (1) allows us to see how our methodology performs relative to the smoothed periodogram matrix in a very simple scenario where the spectral density matrix and its inverse do not change across frequencies, which allow us to evaluate relative performance only as a function of dimensionality. Setting (2) allows us to see how our methodology performs when the data exhibit some degree of autocorrelation and lagged cross-correlation. To construct the VAR(1) model, we set the $p \times p$ coefficient matrix to be a banded matrix such with diagonal entries set to be $0.5$, and for the $j$th row, $j \in \{1, \ldots, p-2\}$, we set the $(j+1)$th column to be $-0.3$ and the $(j+2)$th column to be $0.2$. We use the identity matrix as the covariance matrix for the innovations in the model. Setting (3) creates heterogeneity in the marginal variances, and hence, in the diagonal elements of the spectral density matrix, but truth has a sparse conditional dependence structure.  In particular, we let the VAR(1) coefficient matrix be a diagonal matrix with entries randomly selected from the interval $(0.25, 0.75)$, and a random sign.  The precision matrix for the innovations vector is sparse, with off-diagonal elements equal to 0 or $0.5$ with probability $0.5$.

We evaluate performance in the following ways. First, we use the mean integrated squared error (MISE), defined by
\[
	\mathrm{MISE}\bigl(\bigl\{\widehat{\Theta}(\omega_j)\bigr\}_{j=1}^n, \bigl\{\Theta(\omega_j)\bigr\}_{j=1}^n\bigr) = \frac{2}{n}\sum_{j=1}^{n/2} \bigl\|\widehat{\Theta}(\omega_j) - \Theta(\omega_j)\bigr\|_{*}^2,
\]
where $\{\omega_j\}_{j=1}^{n/2}$ denote the Fourier frequencies in the interval $(0, 0.5)$, $\|\cdot\|_{*}$ denotes the Frobenius norm of a matrix but discarding the diagonal entries, i.e. for a matrix $A = (A_{ij}) \in \mathbb{R}^{p \times p}$,
	\[
		\|A\|_* = \sqrt{\sum_{i = 1}^p \sum_{j \neq i}A_{ij}^2}.
	\] 
	The reason we are discarding the diagonal entries is that we are mainly interested in the off-diagonal entries, and the penalisations deployed in obtaining the sparse precision matrix estimators inevitably introduce bias, especially for the diagonal entries.  If one would like a better estimator of the diagonal entries, one can adopt an optional second step updating the diagonal entries only by forcing the product of the smoothed periodogram matrix and the sparse precision matrix to be identity.  Due to the lack of theoretical guarantees, we omit this optional step in this paper.

We compare our estimator (SIPE) to the na\"{i}ve inverse of the smoothed periodogram matrix (Na\"{i}ve), with smoothing span being the modified Daniell kernel with bandwidth picked using the GCV criterion, and the shrinkage estimator (Shrinkage) by \cite{Bohm:vonSachs:2009}.  We collect the numerical results averaged over 50 repetitions in each setting in Table \ref{sim:miseInverse}.  Each cell of the table is of the form mean (standard deviation).  Since the Na\"{i}ve estimator and the Shrinkage estimator do not produce sparse estimation, we only report the evaluations on the support recovery for the SIPE.  We define the true positive proportion (TPP) and true negative proportion (TNP) as follows.
	\begin{align*}
		& \mathrm{TPP} = \frac{\# \mbox{non-zero diagonal entries in the estimator}}{\# \mbox{non-zero diagonal entries in the truth}}, \\
		& \mathrm{TNP} = \frac{\# \mbox{zero diagonal entries in the estimator}}{\# \mbox{zero diagonal entries in the truth}}.
	\end{align*}
	The results reported are averaged across all frequencies.

First, looking across all simulation settings, we see that the smoothed periodogram matrix sometimes cannot be inverted, motivating the need for some type of regularisation. The spectral density matrix for the white noise (WN) model is the identity matrix across all frequencies. The Shrinkage is biased towards a scaled identity matrix, hence its superior performance in this setting for all dimensionalities and sample sizes. When the time series data possess autocorrelation, such as in the VAR(1) and sparse VAR(1) (sVAR(1)) models, SIPE is competitive with the shrinkage estimator with respect to MISE, yet can reasonably estimate the zero and non-zero entries of the precision matrices. In contrast, the shrinkage estimator behaves like a ridge estimator, and hence, by construction cannot obtain sparse estimates of the inverse spectral density matrix.  We see that our estimator yields favourable estimates of the spectral precision matrix while giving relatively good estimates on which entries of the spectral precision matrix are truly zero or non-zero.

\begin{table}
\centering
\begin{tabular}{ccc|rrr| rr}
Simulation & & & \multicolumn{3}{c}{MISE - Precision Matrix} & \multicolumn{2}{|c}{SIPE}\\ \cline{4-6} \cline{7-8}
Setting & $p$ & $T$ & Na\"{i}ve & Shrinkage & SIPE & TPP & TNP \\
\hline
WN & 10 & 200 & 21.62 (31.58) & 0.39 (1.23) & 0.17 (0.69) & 0.83 (0.05) & 0.82 (0.05) \\
& & 400 & 13.17 (21.24) & 0.22 (0.62) & 0.60 (1.91) & 0.74 (0.05) & 0.74 (0.05) \\
& 50 & 200 & 202.44 (258.17) & 0.04 (0.01) & 0.54 (1.62) & 0.71 (0.01) & 0.69 (0.01) \\
& & 400 & 13.05 (20.57) & 0.02 (0.01) & 1.06 (3.11) & 0.62 (0.01) & 0.62 (0.01) \\
VAR(1) & 10 & 200 & 16.36 (47.10) & 3.76 (3.93) & 3.60 (0.02) & 0.91 (0.02) & 0.90 (0.03) \\
& & 400 & 8.94 (18.89) & 3.69 (6.13) & 3.60 (0.01) & 0.89 (0.02) & 0.87 (0.02) \\
& 50 & 200 & - & 3.62 (0.44) & 4.18 (3.85) & 0.86 (0.01) & 0.81 (0.02) \\
& & 400 & - & 3.57 (0.55) & 3.71 (1.04) & 0.86 (0.01) & 0.84 (0.01) \\
sVAR(1) & 10 & 200 & 119.64 (230.51) & 12.57 (19.40) & 10.18 (20.18) & 0.97 (0.02) & 0.97 (0.03) \\
 & & 400 & 47.04 (99.37) & 12.87 (25.12) & 9.90 (19.62) & 0.94 (0.03) & 0.94 (0.03) \\
 & 50 & 200 & - & 17.11 (24.30) & 15.97 (23.27) & 0.97 (0.01) & 0.96 (0.02) \\
 & & 400 & - & 14.31 (20.97) & 15.96 (23.27) & 0.95 (0.02) & 0.94 (0.02) \\
\end{tabular}
\caption{Simulation results for estimating the inverse of the spectral density matrix, with mean integrated squared error (MISE), and true positive proportions (TPP) and true negative proportions (TNP).  All results entries are in the form of mean (standard deviation). Hyphenated entries (-) denote that the smoothed periodogram matrix could not be inverted.  TPP and TNR are reported for SIPE only. MISE entries were multiplied by $10^3$ for clarity. }
\label{sim:miseInverse}
\end{table}


\subsection{Analysis of the Google Flu Trends Data}
We give an empirical illustration of our proposed methodology by analysing the Google Flu Trends data set. Researchers at Google used select Google search terms to predict influenza activity \citep{Ginsberg09}. The resulting data set consists of weekly predicted numbers of influenza-like-illness related visits out of every 100,000 random outpatient visits within select cities throughout the United States of America. The data set is further aggregated at the state-level and region-level, where the latter comprises of different states. The version of the Google Flu Trends data set we used is the state-level aggregate of log-transformed weekly data from 1 January 2006 to 6 October 2013. The resulting time series thus has $p=50$ dimensions and length $n = 406$.

The goal of our analysis is to investigate the conditional dependencies of the time series across states. To this end, we need to estimate the partial coherence matrix, which is a function of the inverse of the spectral density matrix. The partial coherence matrix is the frequency domain analog of partial correlation, and can be interpreted as the correlation between two time series that have been bandpass filtered at frequency $\omega$, after removing the linear effects of the other time series. The $(j,k)$th element of the partial coherence matrix is $\rho_{jk}(\omega) = |\Theta_{jk}(\omega)|^2/[\Theta_{jj}(\omega)\Theta_{kk}(\omega)]$, where $\Theta(\omega)$ is the inverse spectral density matrix. We use our methodology to obtain a sparse estimate of $\Theta(\omega)$, from which we can then obtain estimates of partial coherence. We are only interested in the partial coherence matrix, and so we centre each time series to have mean zero and then we standardised them to have unit variance.

To pick the parameters of our method, we choose $M_n$ using the GCV criterion. Each of the fifty time series were driven by frequencies within the frequency band $(0, 0.10)$, as shown by the diagonal entries of $\widetilde{f}_{n}$ in Figure~\ref{dat:power}. Indeed, for each of the fifty time series, the variance attributed to each Fourier frequency outside of this band is less than 5\% of the overall variation. Thus, we estimate the partial coherence within this frequency band, and we further summarise our results by taking the median partial coherence within this frequency band. We show our results in Figure~\ref{dat:pcoh}.

\begin{figure}[!htbp]
    \begin{minipage}{0.35\textwidth}
        \centering
			\includegraphics[width=0.8\textwidth,keepaspectratio]{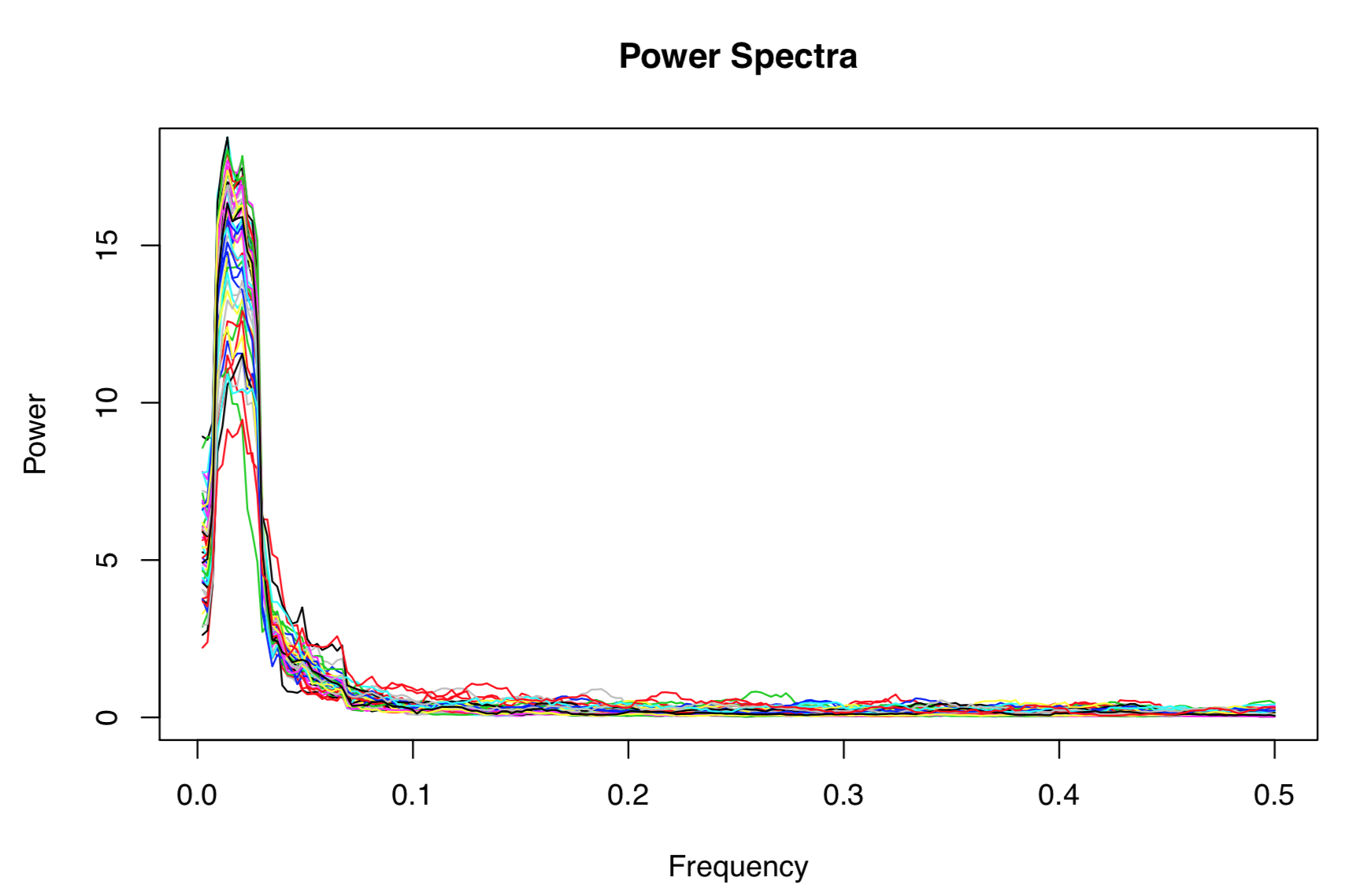}		
			\caption{Power spectra for each state's time series in the Google Flu Trends data. Each colour denotes the power spectrum for one state.} \label{dat:power}
    \end{minipage}\hfill
    \begin{minipage}{0.55\textwidth}
        \centering
        \includegraphics[width=0.4\textwidth,keepaspectratio]{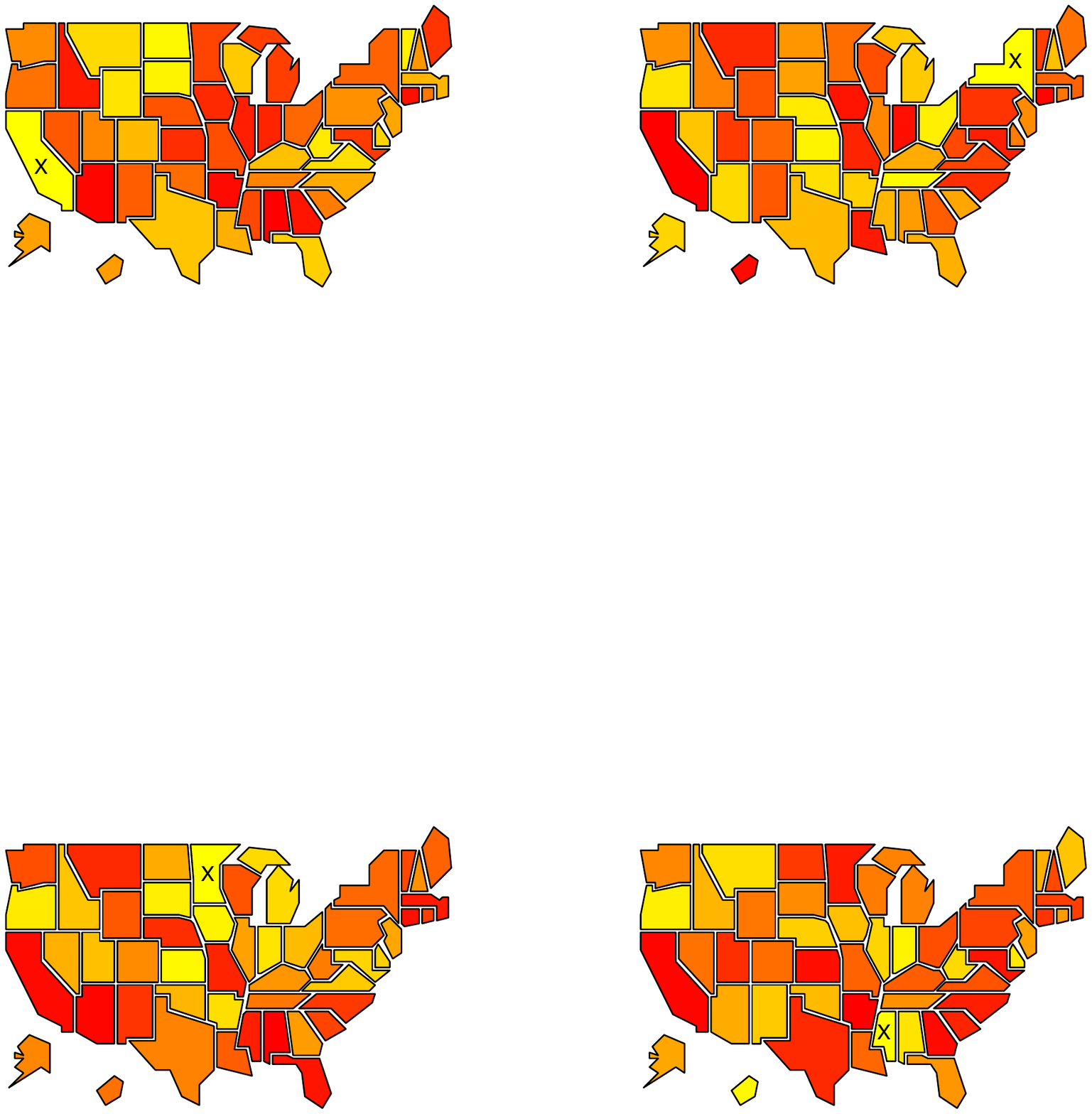}		
\includegraphics[width=0.4\textwidth,keepaspectratio]{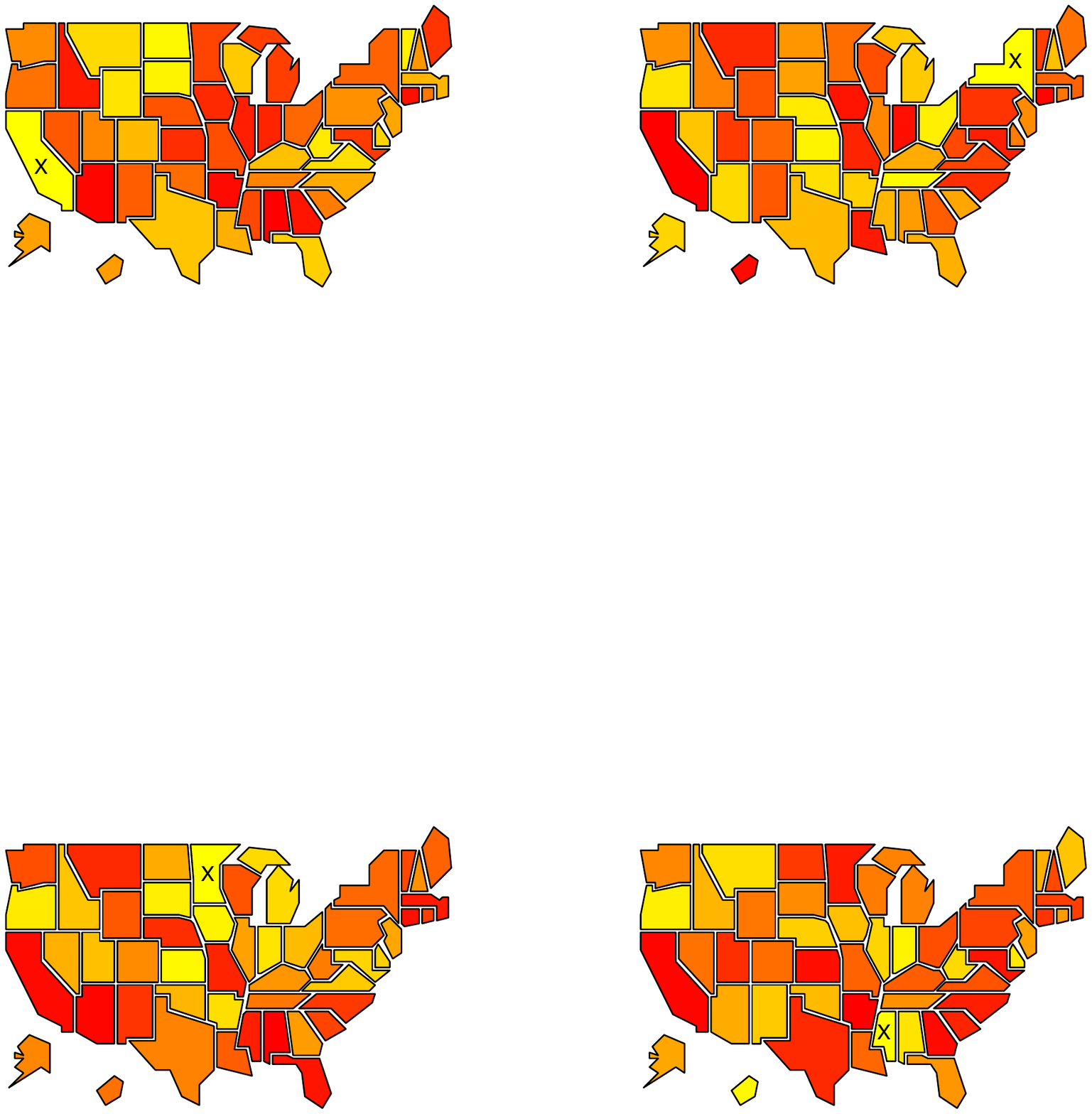}
\includegraphics[width=0.4\textwidth,keepaspectratio]{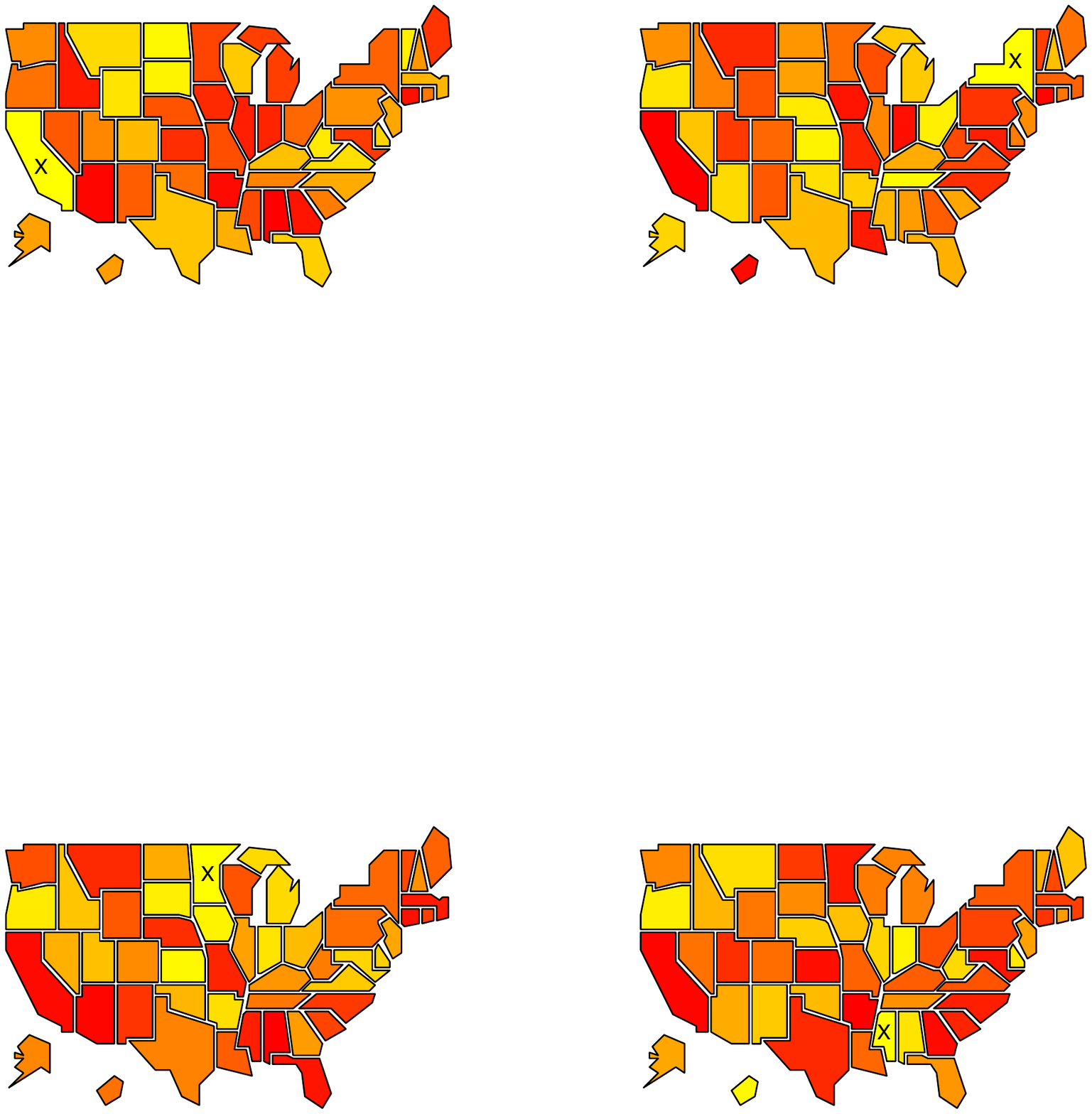}
\includegraphics[width=0.4\textwidth,keepaspectratio]{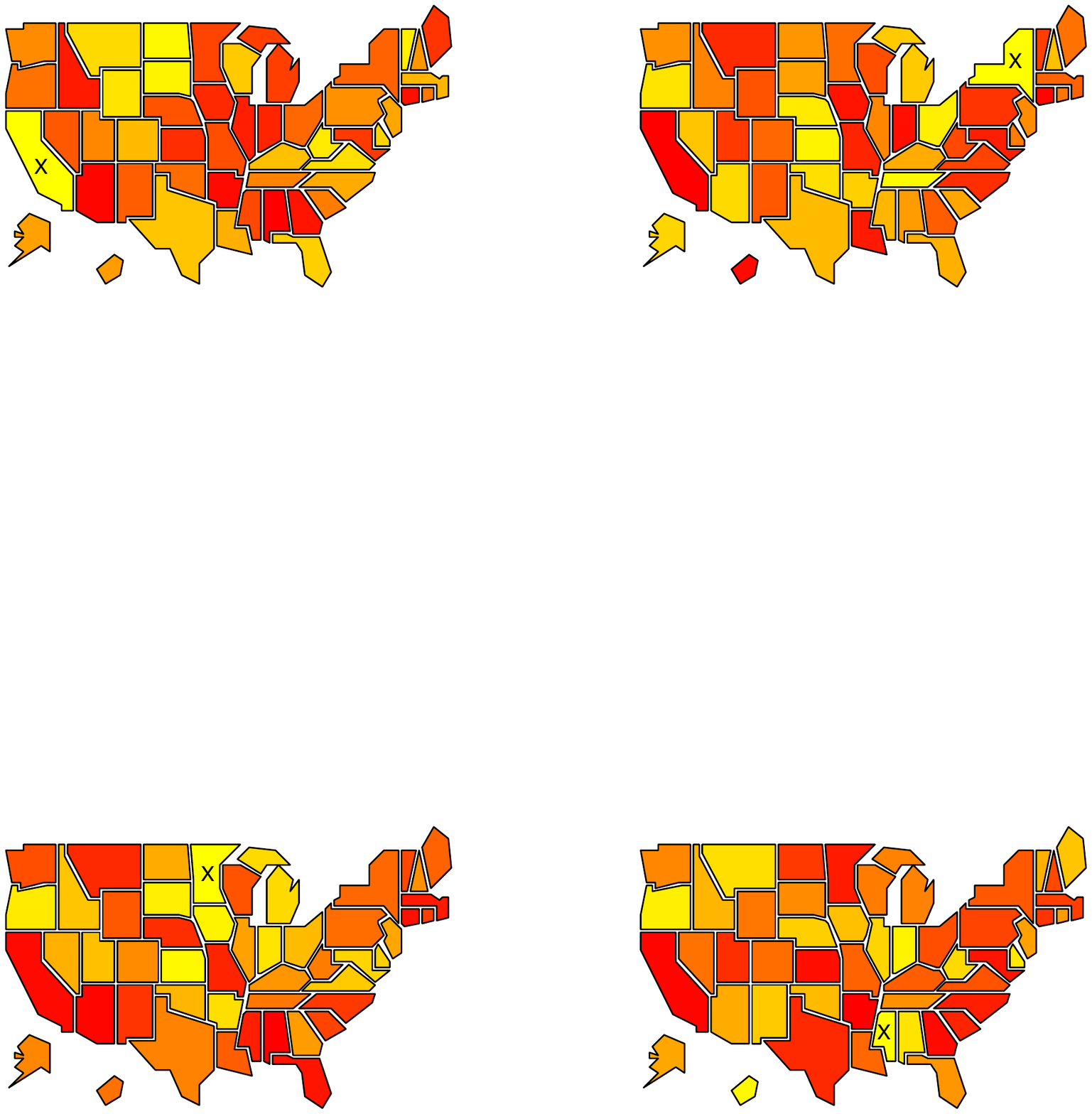}		
\caption{The conditional dependence structure of the Google Flu Trends data across the fifty states between one state marked by an `X' and the other forty-nine states.  In the top-left, top-right, bottom-left and bottom-right, the `X'es are California, New York, Minnesota and Mississippi, respectively.  Yellow and red colours indicate high and low values of partial coherence, respectively.}
\label{dat:pcoh}
    \end{minipage}
\end{figure}

Each of four geographically distinct states (California, New York, Minnesota and Mississippi) yields different conditional independencies. First, we see a local spatial structure. For instance, we see conditional dependencies between Minnesota and its neighbouring Midwest states, 
and conditional dependencies between Mississippi and Alabama, Florida, and Tennessee. 
On the other hand, we also a long-distance structure, e.g., New York with states including Oregon and Washington. Previous analyses have yielded similar results \citep{Davis16}. Looking at all pairwise conditional dependencies yielded a sparse partial coherence matrix, where 28.2\% of the pairs were 0.  

\section*{Appendix}\label{sec-app}

In this section, we collect all the necessary technical details.

\begin{lemma}\label{lem-inverse}
Let $Z := A + \imath B \in \mathbb{C}^{p \times p}$, with $A, B \in \mathbb{R}^{p \times p}$.  Assume $Z$ is non-singular and the inverse of $Z$ is denoted as $Z^{-1}$, then $Z^{-1} = \widetilde{A} + \imath \widetilde{B}$, where $\widetilde{A}, \widetilde{B}\in \mathbb{R}^{p \times p}$, satisfying
\[
\left(\begin{array}{cc}
	A & -B \\
	B & A \\
\end{array}
\right)\left(\begin{array}{cc}
	\widetilde{A} & -\widetilde{B} \\
	\widetilde{B} & \widetilde{A}
\end{array}
\right) = I_{\mathrm{2p}}.
\]
\end{lemma}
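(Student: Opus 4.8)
The plan is to exploit the standard ring isomorphism between $\mathbb{C}$ and a subalgebra of real $2\times 2$ block matrices, and to verify that it extends entrywise to $p\times p$ matrices. Define the map $\Phi: \mathbb{C}^{p\times p} \to \mathbb{R}^{2p\times 2p}$ by
\[
\Phi(A + \imath B) = \begin{pmatrix} A & -B \\ B & A \end{pmatrix}, \qquad A, B \in \mathbb{R}^{p\times p}.
\]
First I would check that $\Phi$ is well-defined (the decomposition of a complex matrix into real and imaginary parts is unique) and that it is multiplicative: for $Z_1 = A_1 + \imath B_1$ and $Z_2 = A_2 + \imath B_2$, the product $Z_1 Z_2 = (A_1 A_2 - B_1 B_2) + \imath(A_1 B_2 + B_1 A_2)$, while a direct block multiplication gives
\[
\begin{pmatrix} A_1 & -B_1 \\ B_1 & A_1 \end{pmatrix}\begin{pmatrix} A_2 & -B_2 \\ B_2 & A_2 \end{pmatrix} = \begin{pmatrix} A_1 A_2 - B_1 B_2 & -(A_1 B_2 + B_1 A_2) \\ B_1 A_2 + A_1 B_2 & -B_1 B_2 + A_1 A_2 \end{pmatrix},
\]
which is exactly $\Phi(Z_1 Z_2)$. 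This is the one genuine computation in the argument, and it is routine; the only point requiring a little care is that $A$ and $B$ need not commute, so one must keep the factors in the correct left-to-right order throughout.

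Next I would observe that $\Phi(I_p) = I_{2p}$, since the imaginary part of $I_p$ is the zero matrix. Now suppose $Z = A + \imath B$ is non-singular with inverse $Z^{-1}$. Write $Z^{-1} = \widetilde{A} + \imath \widetilde{B}$ with $\widetilde{A}, \widetilde{B} \in \mathbb{R}^{p\times p}$ (again using the unique real/imaginary decomposition). Applying $\Phi$ to the identity $Z Z^{-1} = I_p$ and using multiplicativity together with $\Phi(I_p) = I_{2p}$ gives
\[
\Phi(Z)\,\Phi(Z^{-1}) = \begin{pmatrix} A & -B \\ B & A \end{pmatrix}\begin{pmatrix} \widetilde{A} & -\widetilde{B} \\ \widetilde{B} & \widetilde{A} \end{pmatrix} = I_{2p},
\]
which is precisely the claimed identity.

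There is no real obstacle here — the statement is essentially the functoriality of a well-known algebra embedding, and the proof is a half-page verification. If anything, the step most worth stating explicitly is the uniqueness of the real/imaginary decomposition (so that $\widetilde{A}, \widetilde{B}$ are well-defined and $\Phi$ is a genuine function), and the non-commutativity bookkeeping in the block-multiplication identity; beyond that the argument is immediate. One could optionally also remark that $\Phi(Z)$ is invertible if and only if $Z$ is (so the hypothesis that $Z$ is non-singular is exactly what is needed), but this is not required for the stated conclusion.
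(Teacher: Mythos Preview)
Your proposal is correct and is essentially the same argument as the paper's, just packaged more abstractly: the paper expands $(A+\imath B)(\widetilde A+\imath\widetilde B)=I_p$ directly, reads off the two real equations $A\widetilde A-B\widetilde B=I_p$ and $B\widetilde A+A\widetilde B=0$, and observes these are exactly the block columns of the claimed $2p\times 2p$ identity. Your multiplicativity check for $\Phi$ is precisely this computation, so the two proofs coincide in substance.
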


\begin{proof}
It follows from the fact $ZZ^{-1} = I_{\mathrm{p}}$ that 
\[
I_{\mathrm{p}} = (A + \imath 	B)(\widetilde{A} + \imath\widetilde{B}) = (A\widetilde{A}-B\widetilde{B}) + \imath(B\widetilde{A}+A\widetilde{B}),
\]	
which is equivalent to 
\begin{align*}
\left(\begin{array}{cc}
	A & -B \\
	B & A \\
\end{array}
\right)\left(\begin{array}{c}
	\widetilde{A}\\
	\widetilde{B}
\end{array}
\right) = \left(
\begin{array}{c}
	I_{\mathrm{p}} \\
	0
\end{array}
\right),
\, \mbox{and }
\left(\begin{array}{cc}
	A & -B \\
	B & A \\
\end{array}
\right)\left(\begin{array}{c}
	-\widetilde{B}\\
	\widetilde{A}
\end{array}
\right) = \left(
\begin{array}{c}
	0 \\
	I_{\mathrm{p}}
\end{array}
\right).
\end{align*}
Therefore,
\[
\left(\begin{array}{cc}
	A & -B \\
	B & A \\
\end{array}
\right)\left(\begin{array}{cc}
	\widetilde{A} & -\widetilde{B} \\
	\widetilde{B} & \widetilde{A}
\end{array}
\right) = I_{\mathrm{2p}}.
\]

\end{proof}

\begin{proof}[Proof of \Cref{thm-key}]

This proof starts with proving the result for any fixed $\omega$.  For any $(i, j) \in \{1, \ldots, p\}^{\otimes 2}$, note that the $(i, j)$ entry of the periodogram $P_n(\omega)$ can be written as
	\begin{align}
		P_{n, ij}(\omega) = &\frac{1}{n}\sum_{t=1}^{n}X_{t,i}\exp(-\imath2\pi\omega t)\sum_{t=1}^{n}X_{t,j}\exp(\imath2\pi\omega t) \nonumber \\
		= &\frac{1}{n}\sum_{t=2}^{n}\sum_{l=1}^{t-1}X_{t,i}X_{l,j}\exp(-\imath2\pi\omega (t-l)) + \frac{1}{n}\sum_{l=2}^{n}\sum_{t=1}^{l-1}X_{t,i}X_{l,j}\exp(-\imath2\pi\omega (t-l)) \nonumber\\
		& \hspace{1cm} + \frac{1}{n}\sum_{t=1}^{n}X_{t,i}X_{t,j} \nonumber \\
		=: & P^{(1)}_{n, ij}(\omega)+P^{(2)}_{n, ij}(\omega)+P^{(3)}_{n, ij}. \label{eq-thm3-step1}
	\end{align}
 
Next, we are to bound the three terms in the right-hand side of \eqref{eq-thm3-step1} separately.  As for the term $(I)$, we will approximate it by a similar quantities built up by $m$-dependent random variables.   Let 
	\begin{eqnarray*}
		\tilde{f}^{(1)}_{n, ij}(\omega):=\frac{1}{2M_{n}+1}\sum_{s=-M_{n}}^{M_{n}} P^{(1)}_{n, ij}(\omega+s/n)=\frac{1}{n}\sum_{t=2}^{n}\sum_{l=1}^{t-1}X_{t,i}X_{l,j}a_{t-l}(\omega),
	\end{eqnarray*}
	where
	\begin{align}
		a_{k}(\omega) & =\frac{1}{2M_{n}+1}\sum_{s=-M_{n}}^{M_{n}}\exp(-\imath2\pi k (\omega+s/n)) = \frac{1}{2M_n+1} \exp(-\imath2\pi k\omega) \sum_{s = -M_n}^{M_n} \exp(-\imath 2\pi ks/n) \nonumber \\
		& = \frac{1}{2M_n+1} \exp(-\imath2\pi k\omega) \sum_{s = -M_n}^{M_n} \cos (2\pi ks/n) =\exp(-\imath2\pi k\omega)\frac{\sin(2\pi(M_{n}+1/2)k/n)}{(2M_{n}+1)\sin (\pi k/n)}. \label{eq-akomega}
	\end{align}
	The last identity in \eqref{eq-akomega} follows from the trigonometric identity that for any $\alpha$, which is not a multiple of $2\pi$, and any positive integer $m \in \mathbb{N}_+$, we have
	\[
		\sum_{k=0}^m \cos(\phi + k\alpha) = \sin((m+1)\alpha/2)\cos(\phi + m\alpha/2)/\sin(\alpha/2).
	\]
 
In addition, for any $\alpha$, which is not a multiple of $\pi$, and any $m \in \mathbb{N}_+$, it holds that
 	\[
 		\left|\frac{\sin(m\alpha)}{\sin(\alpha)}\right| = \left|\frac{\exp(\imath m\alpha)\sin(m\alpha)}{\exp(\imath \alpha)\sin(\alpha)}\right| = \left|\frac{1 - \exp(\imath 2m\alpha)}{1 - \exp(\imath 2\alpha)}\right| = \left|\sum_{n = 0}^{m-1} \exp(\imath 2\alpha)\right| \leq m.
 	\]
	Then for $k \in \{1, \ldots, n-1\}$,	 we have
	\begin{equation}\label{yi-1}
		\frac{\sin^{2}(2\pi(M_{n}+1/2)k/n)}{(2M_{n}+1)^{2}\sin^{2} (\pi k/n)}\leq 1.
	\end{equation}
	For $k \in \{\lfloor n/(M_{n}+1/2)\rfloor +1, \ldots, \lfloor n-n/(M_{n}+1/2)\rfloor \}$, we have
	\begin{equation}\label{yi-2}
		|\sin(\pi k/n)|\geq \pi/2\min(k, n-k)/n.
	\end{equation}
	Combining \eqref{yi-1} and \eqref{yi-2} we have
	\begin{align*}
		\sum_{k=1}^{n-1}|a_{k}(\omega)|^{2} =& \sum_{k=1}^{n-1}\frac{\sin^{2}(2\pi(M_{n}+1/2)k/n)}{(2M_n+1)^2\sin^{2} (\pi k/n)} \\
		=& \left(\sum_{k=1}^{\lfloor n/(M_n+1/2)\rfloor} + \sum_{k = \lfloor n/(M_n+1/2)\rfloor + 1}^{\lfloor n - n/(M_n+1/2)\rfloor} + \sum_{k = \lfloor n - n/(M_n+1/2)\rfloor + 1}^{n-1}\right) \frac{\sin^{2}(2\pi(M_{n}+1/2)k/n)}{(2M_n+1)^2\sin^{2} (\pi k/n)}\\
		\leq & \min \left\{2\lceil n/(M_n+1/2)\rceil + \frac{\pi^2}{3}\frac{n^2}{(2M_n+1)^2}, \, n-1\right\},
	\end{align*}
	where the last inequality follows from the fact that $\sum_{k=1}^{\infty} = \pi^2/6$.  Then,
	\[
		\sum_{k=1}^n|a_{k}(\omega)|^{2} \leq \min \left\{2\lceil n/(M_n+1/2)\rceil + \frac{\pi^2}{3}\frac{n^2}{(2M_n+1)^2} + 1, \, n\right\} =: A_n;
	\] 
	and
	\begin{align}\label{eq-maxak}
		\max_{k = 1, \ldots, n}|a_k(\omega)| = 1,
	\end{align}
	by noting that $|a_n(\omega)| = 1$.

For $i = 1, \ldots, p$, let $\bar{X}_{t,i}=\mathbb{E} (X_{t,i}|\mathcal{F}_{t,m})$,  where $\mathcal{F}_{t, m}=(\boldsymbol{e}_{t-m},\ldots,\boldsymbol{e}_{t})$ with $m= \lceil (\log(n))^{2}\rceil $.  Note that
	\begin{align*}
		X_{t, i} - \bar{X}_{t,i} = \sum_{j = m+1}^{\infty} \bigl(\mathbb{E}(X_{t,i} \mid \mathcal{F}_{t,j}) - \mathbb{E}(X_{t, i} \mid \mathcal{F}_{t, j-1})\bigr) =: \sum_{j=m+1}^{\infty} d_{j},
	\end{align*}
	with $\bigl\{\mathbb{E}(d_j^2)\bigr\}^{1/2} \leq \theta_{t,j}$.  It follows from Assumption~\ref{assump-c1} and Theorem~1(ii) in \cite{Wu2005}, that
	\begin{equation}\label{eq-barnobar}
		\bigl\{\mathbb{E}((\bar{X}_{t,j} - X_{t,j})^2)\bigr\}^{1/2} = O(\rho^m).
	\end{equation}
	Define 
	\begin{eqnarray*}
		\bar{f}^{(1)}_{ij,n}(\omega)=\frac{1}{n}\sum_{t=2}^{n}\sum_{l=1}^{t-1}\bar{X}_{t,i}\bar{X}_{l,j}a_{t-l}(\omega).
	\end{eqnarray*}
	It follows from Proposition~1 in \cite{LiuWu2010}, \eqref{eq-maxak} and \eqref{eq-barnobar} that 
	\begin{equation}\label{eq-firstterm}
		\mathbb{E}|\bar{f}^{(1)}_{ij,n}(\omega)-\tilde{f}^{(1)}_{ij,n}(\omega)|=O(n\rho^{m}). 
	\end{equation}

Now let $M=(\log(n))^{2}$ and define
	\begin{align*}
		& \breve{X}_{t,i}=\bar{X}_{t,i}\mathbbm{1}\{|\bar{X}_{t,i}|\leq M\}, \quad \hat{X}_{t,i}=\breve{X}_{t,i}-\mathbb{E} (\breve{X}_{t,i}), \\
		& \breve{f}^{(1)}_{ij,n}(\omega)=\frac{1}{n}\sum_{t=2}^{n}\sum_{l=1}^{t-1}\breve{X}_{t,i}\breve{X}_{l,j}a_{t-l}(\omega), \quad \hat{f}^{(1)}_{ij,T}(\omega)=\frac{1}{n}\sum_{t=2}^{n}\sum_{l=1}^{t-1}\hat{X}_{t,i}\hat{X}_{l,j}a_{t-l}(\omega).
	\end{align*}
	Note that for centred random vectors, we have 
	\begin{align}
		& |\mathbb{E}(\bar{X}_{t,i}\mathbbm{1}\{|\bar{X}_{t,i}|\leq M\})|=|\mathbb{E} (\tilde{X}_{t,i}\mathbbm{1}\{|\tilde{X}_{t,i}|> M\})| \leq \mathbb{E}\bigl\{|\bar{X}_{t,i}|\mathbbm{1}\{|\bar{X}_{t,i}| > M\}\bigr\} \nonumber\\
		\leq & \int_M^{\infty} \mathbb{P}\bigl\{|\bar{X}_{t,i}| > t\bigr\}\, dt \leq C_0\int_M^{\infty} \exp(-\kappa t)\,dt = C_0\kappa^{-1}\exp(-\kappa M), \label{eq-secondterm}
	\end{align}
	where the last inequality follows from Assumption~\ref{assump-c1} and Markov's inequality.

It also follows from Assumption~\ref{assump-c1} that
	\begin{equation}\label{eq-thirdterm}
		\mathbb{P}(\bar{f}^{(1)}_{ij,n}(\omega)\neq \breve{f}^{(1)}_{ij,n}(\omega))\leq \max \Bigl\{\sum_{t=1}^{n}\mathbb{P}(|X_{t,i}|\geq M),\, \sum_{t=1}^{n}\mathbb{P}(|X_{t,j}|\geq M) \Bigr\} \leq 2C_0n\exp(-\kappa M).
	\end{equation}
	
To this end, we have for any $\varepsilon > 0$,
	\begin{align}
		& \mathbb{P}\bigl\{|\tilde{f}^{(1)}_{ij, n}(\omega) - \hat{f}^{(1)}_{ij,n}(\omega)| > \varepsilon \bigr\}	\nonumber \\
		\leq & \mathbb{P}\bigl\{|\tilde{f}^{(1)}_{ij, n}(\omega) - \bar{f}^{(1)}_{ij,n}(\omega)| > \varepsilon/3 \bigr\}  + \mathbb{P}\bigl\{|\bar{f}^{(1)}_{ij, n}(\omega) - \breve{f}^{(1)}_{ij,n}(\omega)| > \varepsilon/3 \bigr\} + \mathbb{P}\bigl\{|\breve{f}^{(1)}_{ij, n}(\omega) - \hat{f}^{(1)}_{ij,n}(\omega)| > \varepsilon/3 \bigr\} \nonumber \\
		=: & (I) + (II) + (III). \label{eq-decomp}
	\end{align}

Moreover, it follows from Markov's inequality and \eqref{eq-firstterm} that
	\begin{equation}\label{eq-decomp1}
		(I) \leq \frac{\mathbb{E}\bigl(|\tilde{f}^{(1)}_{ij, n}(\omega) - \bar{f}^{(1)}_{ij,n}(\omega)|\bigr)}{\varepsilon/3} = O(n\rho^m \varepsilon^{-1}).
	\end{equation}
	It follows from \eqref{eq-thirdterm} that
	\begin{equation}\label{eq-decomp2}
		(II) \leq \mathbb{P}(\bar{f}^{(1)}_{ij,n}(\omega)\neq \breve{f}^{(1)}_{ij,n}(\omega)) \leq 2nC_0 \exp\{-\kappa M\}.
	\end{equation}
	Due to Markov's inequality and \eqref{eq-secondterm}, the following holds
	\begin{align}\label{eq-decomp3}
		(III) = O\bigl(n\varepsilon^{-1}\exp\bigl(-2\kappa M\bigr)\bigr).
	\end{align}
 	Finally, combining \eqref{eq-decomp}, \eqref{eq-decomp1}, \eqref{eq-decomp2} and \eqref{eq-decomp3}, we obtain that
 	\begin{equation}\label{eq-tildehatdiff}
 		\mathbb{P}\bigl\{|\tilde{f}^{(1)}_{ij, n}(\omega) - \hat{f}^{(1)}_{ij, n}(\omega)| > \varepsilon \bigr\} = O\bigl(n\rho^m \varepsilon^{-1} + n\exp\{-\kappa M\} + n\varepsilon^{-1}\exp\bigl(-2\kappa M\bigr)\bigr).
 	\end{equation}
 
\vskip 3mm 

Note that $(\hat{X}_{t,i},\hat{X}_{t,j})$, $1\leq t\leq n$ are also $m$-dependent random vectors with zero means.  In addition, we have \eqref{eq-maxak},
	\[
		\max_{\stackrel{t = 1, \ldots, n}{i = 1, \ldots, p}} \mathbb{E}(\hat{X}^2_{t, i}) \leq K_0, \quad \max_{\stackrel{t = 1, \ldots, n}{i = 1, \ldots, p}} \mathbb{E}(\hat{X}^4_{t, i} )\leq K_0,
	\]
	where $K_0$ only depends on $C_0$ following from Assumption~\ref{assump-c1}.  Therefore it follows from Proposition~3 in \cite{LiuWu2010} that for any $x\geq 1$, $y\geq 1$ and any constant $Q>0$ we have,
	\begin{align*}
		& \mathbb{P}(|\hat{f}^{(1)}_{ij, n}(\omega) - \mathbb{E}\bigl(\hat{f}^{(1)}_{ij, n}(\omega)\bigr)|\geq x/n)\\
		\leq &  2e^{-y/4}+C_{1}n^{3}M^{2}\Big{(}x^{-2}y^{2}m^{3}(M^{2}+n)\sum_{k=1}^{n}|a_{k}(\omega)|^{2}\Big{)}^{Q} \\
		& \hspace{0.5cm}+ C_{1}n^{4}M^{2}\max\left\{\mathbb{P}\left(|\hat{X}_{0,i}|\geq\frac{C_{2}x}{ym^{2}(M+n^{1/2})}\right), \, \mathbb{P}\left(|\hat{X}_{0,j}|\geq\frac{C_{2}x}{ym^{2}(M+n^{1/2})}\right)\right\},
	\end{align*}
	where $C_{1}$ and $C_{2}$ are  positive constants depending only on $Q$, $\kappa$ and $C_{0}$. 

For any $\delta > 0$ and $H > 0$, letting $x=(n/M_{n})^{1/2+\delta}n^{1/2}$ and $y=(\log(n))^{2}$, there exists a constant $C_3 > 0$  only depending on $H$, $\kappa$ and $C_0$, such that
	\begin{align}\label{eq-LiuWu2010Prop3}
		& \mathbb{P}(|\hat{f}^{(1)}_{ij,n}(\omega) - \mathbb{E}\bigl(\hat{f}^{(1)}_{ij,n}(\omega)\bigr)|\geq (n/M_{n})^{1/2+\delta}n^{-1/2}) \nonumber\\
		\leq & 2n^{-\log (n)/4} + C_3 n^{3-(1+2\delta)Q} A_n^Q M_n^{(1+2\delta)Q}(\log(n))^{4 + 10Q} \nonumber \\
		\leq & C_3 n^{-H}.
	\end{align}
	Combining \eqref{eq-tildehatdiff} and \eqref{eq-LiuWu2010Prop3}, we have
	\begin{eqnarray*}
		\mathbb{P}(|\tilde{f}^{(1)}_{ij,n}(\omega)-\mathbb{E} \hat{f}^{(1)}_{ij,n}(\omega)|\geq 2(n/M_{n})^{1/2+\delta}n^{-1/2} )\leq C_3n^{-H}.
	\end{eqnarray*}

We now seek to bound $\mathbb{E}|\hat{f}^{(1)}_{ij,n}(\omega)-\tilde{f}^{(1)}_{ij,n}(\omega)|$.  Note that
	\begin{align*}
		& \mathbb{E}|\hat{f}^{(1)}_{ij,n}(\omega)-\tilde{f}^{(1)}_{ij,n}(\omega)| \\
		\leq &  \mathbb{E}|\hat{f}^{(1)}_{ij,n}(\omega)-\breve{f}^{(1)}_{ij,n}(\omega)|	+ \mathbb{E}|\breve{f}^{(1)}_{ij,n}(\omega)-\bar{f}^{(1)}_{ij,n}(\omega)| + \mathbb{E}|\bar{f}^{(1)}_{ij,n}(\omega)-\tilde{f}^{(1)}_{ij,n}(\omega)| \\
		\leq & 2C_0Mn^2\exp(-\kappa M) + O(n \exp\{-\kappa M\})+ O(n\rho^m) \leq O(Mn\rho^m),
	\end{align*}
	which implies
	\begin{eqnarray}\label{eq-final-1}
		\mathbb{P}(|\tilde{f}^{(1)}_{ij,n}(\omega)-\mathbb{E} (\tilde{f}^{(1)}_{ij,n}(\omega))|\geq 3(n/M_{n})^{1/2+\delta}n^{-1/2} )\leq C_{3}n^{-H}.
	\end{eqnarray}
	Similarly arguments lead to
	\begin{eqnarray}\label{eq-final-2}
		\mathbb{P}(|\tilde{f}^{(2)}_{ij,n}(\omega)-\mathbb{E} (\tilde{f}^{(2)}_{ij,n}(\omega))|\geq 3(n/M_{n})^{1/2+\delta}n^{-1/2} )\leq C_{3}n^{-H},
	\end{eqnarray}
	and
	\begin{eqnarray}\label{eq-final-3}
		\mathbb{P}\left(\left|\frac{1}{n}\sum_{t=1}^{n}X_{t,i}X_{t,j}-\mathbb{E}\left\{ \frac{1}{n}\sum_{t=1}^{n}X_{t,i}X_{t,j}\right\}\right|\geq n^{-1/2+\delta}\right)\leq C_{4}n^{-H}.
	\end{eqnarray}
	Combining \eqref{eq-final-1}-\eqref{eq-final-3}, we obtain
	\begin{eqnarray}\label{eq-final-4}
		\mathbb{P}(|\tilde{f}_{ij,n}(\omega)-\mathbb{E} (\tilde{f}_{ij,n}(\omega))|\geq 7(n/M_{n})^{1/2+\delta}n^{-1/2} )\leq C_{5}n^{-H},
	\end{eqnarray}
	where $C_4, C_5 > 0$ are constants only depending on $\delta, H, \kappa$ and $C_0$.

It follows from a slight modification of Theorem~10.4.1 in \cite{BrockwellDavis2006} and Assumption~\ref{assump-c1} that there exists a constant $C_6 > 0$ only depending on $\kappa$ and $C_0$ such that
	\begin{equation}\label{eq-final-5}
		\max_{i,j}|f_{ij}(\omega)-\mathbb{E} (\tilde{f}_{ij,n}(\omega))| \leq C_6 M_{n}/n.
	\end{equation}
	Combining \eqref{eq-final-4} and \eqref{eq-final-5} yields that for any $(i, j)$ we have
	\begin{eqnarray*}
		\mathbb{P}(|\tilde{f}_{ij,n}(\omega)-f_{ij}(\omega)|\geq C_6M_{n}/n+8(n/M_{n})^{1/2+\delta}n^{-1/2} )\leq C_{5}n^{-H}.
	\end{eqnarray*}

Therefore, using the union bound argument we can show that there exists a constant $C > 0$ depending only on $\kappa$ and $C_0$ such that for any $\delta > 0$ and $H > 0$ the following holds
	\begin{align*}
		\mathbb{P}\left\{\sup_{k \in \{-\lfloor (n-1)/2 \rfloor, \ldots, \lfloor n/2 \rfloor\}} \max_{i, j = 1, \ldots, p} |\widetilde{f}_{ij,n}(\omega_k)-f_{ij}(\omega_k)| > C M_{n}/n+8(n/M_{n})^{1/2+\delta}n^{-1/2} \right\} \leq p^2n^{-H}.
	\end{align*}

\end{proof}

\begin{proof}[Proof of \Cref{thm-main}]
It is due to Theorem~7.2 in \cite{CaiEtal2016} that for any symmetric matrix $A$ and $w \in [1, \infty]$, the relation $\|A\|_w \leq \|A\|_1$ holds; therefore it is enough to consider only the $w = 1$ case. 

Define the event
	\[
		\mathcal{A}_n := \left\{\sup_{\omega}\max_{k, l = 1, \ldots, p} |\widetilde{f}_{ij,n}(\omega)-f_{ij}(\omega)| \leq C\left(\frac{M_{n}}{n}+\frac{1}{\sqrt{M_{n}}}\Big{(}\frac{n}{M_{n}}\Big{)}^{\delta}\right)\right\},
	\]
	where $C > 0$ and $\delta > 0$ are constants.  It follows from Therorem~\ref{thm-key} and Assumption~\ref{assump-c2} that 
	\[
		\mathbb{P}\{\mathcal{A}_n\} \to 1, 
	\]
	as $n \to \infty$.
	
Let $\mathcal{O} := \{-\lfloor (n-1)/2 \rfloor, \ldots, \lfloor n/2 \rfloor\}$.  In the event $\mathcal{A}_n$, we have
	\begin{align*}
		& \sup_{\omega \in \mathcal{O}}\|\widehat{\Sigma}(\omega)\Theta(\omega) - I\|_{\infty}  \leq \sup_{\omega}\|\Theta(\omega)\|_1 \sup_{\omega}\|\widehat{\Sigma}(\omega) - \Sigma(\omega)\|_{\infty} \leq \sup_{\omega}\|\Theta(\omega)\|_1 \sup_{\omega}\|\widetilde{f}_T(\omega) - f(\omega)\|_{\infty} \\
		\leq & \frac{CM_{n, p}M_n}{n} + \frac{CM_{n, p}n^{\delta}}{M_n^{1/2+\delta}} \asymp \lambda,
	\end{align*}
	where $M_{n, p}$ is defined in \eqref{eq-sparsity-definition}.	Then due to the definition of $\widehat{\Theta}(\omega)$, we have for any $\omega$, on the event $\mathcal{A}_n$, $\|\widehat{\Theta}(\omega)\|_1 \leq \|\Theta(\omega)\|_1 \leq M_{n, p}$.
	
Therefore, in the event $\mathcal{A}_n$,
	\begin{align*}
		& \sup_{\omega \in \mathcal{O}}\|\widehat{\Theta}(\omega) - \Theta(\omega)\|_{\infty} = \sup_{\omega \in \mathcal{O}}\|(\Theta(\omega) \widehat{\Sigma}(\omega) - I)\widehat{\Theta}(\omega) + \Theta(\omega)(I - \widehat{\Sigma}(\omega)\widehat{\Theta}(\omega))\|_{\infty}	\\
		\leq & \sup_{\omega \in \mathcal{O}}\|\widehat{\Theta}(\omega)\|_1 \sup_{\omega  \in \mathcal{O}}\|\Theta(\omega) \widehat{\Sigma}(\omega) - I\|_{\infty} + \sup_{\omega \in \mathcal{O}}\|\Theta(\omega)\|_1 \sup_{\omega \in \mathcal{O}}\|I - \widehat{\Sigma}(\omega)\widehat{\Theta}(\omega)\|_{\infty} \\
		\leq & 2M_{n, p}\lambda := t_n.
	\end{align*}

Moreover, we are to bound the $\ell_1$ errors.  it follows from Lemma~7.1 in \cite{CaiEtal2016} that in the event $\mathcal{A}_n$ we have
	\begin{align*}
		\sup_{\omega \in \mathcal{O}}\|\widehat{\Theta}(\omega) - \Theta(\omega)\|_1 \leq 12 c_{n, p}t_n^{1-q},
	\end{align*}
	where $c_{n, p}$ is defined in \eqref{eq-sparsity-definition}, and we complete the proof.

\end{proof}


\begin{thebibliography}{99}

\bibitem[{Barigozzi and Hallin(2017)}]{BarigozziHallin2017} Barigozzi, M., \& Hallin, M. (2017). A network analysis of the volatility of high dimensional financial series. 
\newblock \emph{Journal of the Royal Statistical Society: Series C (Applied Statistics)}, \textbf{66}, 581--605.

\bibitem[{Basu and Michailidis(2015)}]{Basu15} Regularized estimation in sparse high-dimensional time series models. 
\newblock \emph{The Annals of Statistics}, \textbf{43}, 1535--67.

\bibitem[Bohm and Von Sachs(2009)]{Bohm:vonSachs:2009}
B\"ohm, H. \& von Sachs, R. (2009). Shrinkage estimation in the frequency domain of multivariate time series. 
\newblock \emph{Journal of Multivariate Analysis.} \textbf{100}, 913--35.

\bibitem[{Brillinger(1981)}]{Brillinger1981}Brillinger, D. R. (2001). Time series: data analysis and theory. Vol. 36. Siam.

\bibitem[{Brockwell and Davis(2006)}]{BrockwellDavis2006} Brockwell, Peter J., and Richard A. Davis. (2006). Time Series: Theory and Methods. 
\newblock Springer Science \& Business Media.

\bibitem[{Cai et al.(2010)}]{CaiEtal2010} Cai, T. T., Zhang, C. H. \& Zhou, H. H. (2010). Optimal rates of convergence for covariance matrix estimation. 
\newblock \emph{The Annals of Statistics}, \textbf{38}, 2118--44.

\bibitem[{Cai et al.(2011)}]{CaiEtal2011} Cai, T., Liu, W., \& Luo, X. (2011). A constrained $\ell_1$ minimization approach to sparse precision matrix estimation. 
\newblock \emph{Journal of the American Statistical Association}, \textbf{106}, 594--607.

\bibitem[{Cai et al.(2016)}]{CaiEtal2016} Cai, T. T., Liu, W. \& Zhou, H. H. (2016).  Estimating sparse precision matrix: Optimal rates of convergence and adaptive estimation.
\newblock \emph{The Annals of Statistics}, \textbf{44}, 455--88.

\bibitem[{Chang et al.(2017)}]{ChangEtal2017} Chang, J., Yao, Q., \& Zhou, W. (2017). Testing for high-dimensional white noise using maximum cross-correlations. 
\newblock \emph{Biometrika}, \textbf{104}, 111--27.


\bibitem[{Chen et al.(2013)}]{ChenEtal2013} Chen, X., Xu, M., \& Wu, W. B. (2013). Covariance and precision matrix estimation for high-dimensional time series. 
\newblock \emph{The Annals of Statistics}, \textbf{41}, 2994--3021.

\bibitem[Dahlhaus(2000)]{Dahlhaus00} Dahlhaus, R. (2000). Graphical interaction models for multivariate time series. 
\newblock \emph{Metrika}, \textbf{51}, 157--72.

	
\bibitem[{Davis et al.(2016)}]{Davis16} Davis, R. A., Zang, P., \& Zheng, T. (2016).  Sparse vector autoregressive modeling. 
\newblock \emph{Journal of Computational and Graphical Statistics}, \textbf{25}, 1077--96.

\bibitem[{Fan and Lv(2016)}]{FanLv2016} Fan, Y. \& Lv, J. (2016). Innovated scalable efficient estimation in ultra-large Gaussian graphical models.
\newblock \emph{The Annals of Statistics}, \textbf{44}, 2098--126.

\bibitem[{Fan and Li(2001)}]{FanLi2001} Fan, J., \& Li, R. (2001). Variable selection via nonconcave penalized likelihood and its oracle properties. 
\newblock \emph{Journal of the American statistical Association}, \textbf{96}, 1348--360.
	
\bibitem[{Fiecas and Ombao(2011)}]{Fiecas11} Fiecas, M. and Ombao, H. (2011). The generalized shrinkage estimator for the analysis of functional connectivity of brain signals. 
\newblock \emph{The Annals of Applied Statistics}, 1102--25.

\bibitem[{Fiecas and von Sachs(2014)}]{Fiecas14} Fiecas, M. and von Sachs, R. (2014). Data-driven shrinkage of the spectral density matrix of a high-dimensional time series. 
\newblock \emph{Electronic Journal of Statistics}, \textbf{8}, 2975--3003.

\bibitem[{Ginsberg et al.(2009)}]{Ginsberg09} Ginsberg, J., Mohebbi, M. H., Patel, R. S., Brammer, L., Smolinski, M. S., \& Brilliant, L. (2009). Detecting Influenza Epidemics Using Search Engine Query Data.
\newblock \emph{Nature}, \textbf{457}, 1012--4. 

\bibitem[{Guo et al.(2016)}]{Guo16} Guo, S., Wang, Y., \& Yao, Q. (2016). High-dimensional and banded vector autoregressions.
\newblock \emph{Biometrika}, \textbf{103}, 889--903

\bibitem[{Holbrook et al.(2018)}]{Holbrook2018} Holbrook, A., Lan, S., Vandenberg-Rodes, A., \& Shahbaba, B. (2018). Geodesic Lagrangian Monte Carlo over the space of positive definite matrices: with application to Bayesian spectral density estimation. 
\newblock \emph{Journal of Statistical Computation and Simulation}, \textbf{88}, 982--1002.


\bibitem[{Huang et al.(2013)}]{HuangEtal2013} Huang, J., Sun, T., Ying, Z., Yu, Y., \& Zhang, C. H. (2013).  Oracle inequalities for the lasso in the Cox model. 
\newblock \emph{Annals of statistics}, \textbf{41}, 1142.

\bibitem[{Jung et al.(2015)}]{Jung2015} Jung, A., Hannak, G., \& Goertz, N. (2015).  Graphical lasso based model selection for time series.
\newblock \emph{IEEE Signal Processing Letters}, \textbf{22}, 1781--1785.


\bibitem[{Lam and Yao(2012)}]{LamYao2012} Lam, C., \& Yao, Q. (2012). Factor modeling for high-dimensional time series: inference for the number of factors. 
\newblock \emph{The Annals of Statistics}, \textbf{40}, 694--726.



\bibitem[{Liu and Wu(2010)}]{LiuWu2010}Liu, W., \& Wu, W. B. (2010). Asymptotics of spectral density estimates. 
\newblock \emph{Econometric Theory}, \textbf{26}, 1218--45.	

\bibitem[{Meinshausen and B\"{u}hlmann(2006)}]{MeinshausenBuhlmann2006}Meinshausen, N., \& B\"{u}hlmann, P. (2006). High-dimensional graphs and variable selection with the Lasso. 
\newblock \emph{The annals of statistics}, \textbf{34}, 1436--62.

\bibitem[{Meinshausen and B\"uhlmann(2010)}]{MeinshausenBuhlmann2010} Meinshausen, N., \& B\"uhlmann, P. (2010). Stability selection. 
\newblock \emph{Journal of the Royal Statistical Society: Series B (Statistical Methodology)}, \textbf{72}, 417--73.

\bibitem[{Negahban and Wainwright(2011)}]{NegahbanWainwright2011} Negahban, S., \& Wainwright, M. J. (2011). Estimation of (near) low-rank matrices with noise and high-dimensional scaling. 
\newblock \emph{The Annals of Statistics}, 1069--97.

\bibitem[{Ombao et al(2001)}]{Ombao01}  Ombao, H. C., Raz, J. A., Strawderman, R. L., \& von Sachs, R. (2001). A simple generalised crossvalidation method of span selection for periodogram smoothing. 
\newblock \emph{Biometrika}, 88(4), 1186--92.

\bibitem[{Qin and Rohe(2013)}]{QinRohe2013} Qin, T., \& Rohe, K. (2013). Regularized spectral clustering under the degree-corrected stochastic blockmodel. 
\newblock In \emph{Advances in Neural Information Processing Systems}, 3120--8.

\bibitem[{Schneider-Luftman and Walden(2016)}]{Schneider-Luftman16} Schneider-Luftman, D., \& Walden, A. T. (2016).  Partial coherence estimation via spectral matrix shrinkage under quadratic loss. 
\newblock In \emph{IEEE Transactions on Signal Processing}, \textbf{64}, 5767--5777.


\bibitem[{Tibshirani(1996)}]{Tibshirani1996} Tibshirani, R. (1996). Regression shrinkage and selection via the lasso. 
\newblock \emph{Journal of the Royal Statistical Society. Series B (Methodological)}, 267-288.
	
\bibitem[{van de Geer(2008)}]{VandeGeer2008} Van de Geer, S. A. (2008). High-dimensional generalized linear models and the lasso. 
\newblock \emph{The Annals of Statistics}, \textbf{36}, 614--45.	

\bibitem[{Wang et al.(2017)}]{WangEtal2017} Wang, D., Yu, Y., \& Rinaldo, A. (2017). Optimal Covariance Change Point Detection in High Dimension. 
\newblock \emph{ArXiv preprint, arXiv:1712.09912}.
	
\bibitem[{Wasserman and Roeder(2009)}]{WassermanRoeder2009} Wasserman, L., \& Roeder, K. (2009). High dimensional variable selection. 
\newblock \emph{The Annals of statistics}, \textbf{37}, 2178.	
	
\bibitem[{Wu(2005)}]{Wu2005} Wu, W. B. (2005). Nonlinear system theory: Another look at dependence. 
\newblock \emph{Proceedings of the National Academy of Sciences of the United States of America}, \textbf{102}, 14150--4.

\bibitem[{Xie and Huang(2009)}]{XieHuang2009} Xie, H., \& Huang, J. (2009). SCAD-penalized regression in high-dimensional partially linear models. 
\newblock \emph{The Annals of Statistics}, \textbf{37}, 673--96.

\bibitem[{Yu et al.(2018)}]{YuEtal2018} Yu, Y., Bradic, J. \& Samworth, R. J. (2018).  Confidence intervals for high-dimensional Cox models.
\newblock \emph{ArXiv preprint arXiv:1803.01150}.

\bibitem[{Yuan and Lin(2007)}]{YuanLin2007} Yuan, M. \& Lin, Y. (2007). Model selection and estimation in the Gaussian graphical model. 
\newblock \emph{Biometrika}, \textbf{94}, 19--35.

\bibitem[{Zhang(2010)}]{Zhang2010} Zhang, C. H. (2010). Nearly unbiased variable selection under minimax concave penalty. 
\newblock \emph{The Annals of statistics}, \textbf{38}, 894--942.

\bibitem[{Zhang and Zhang(2014)}]{ZhangZhang2014} Zhang, C.‐H. \& Zhang, S. S. (2014) Confidence intervals for low dimensional parameters in high dimensional linear models.
\newblock \emph{Journal of the Royal Statistical Society: Series B (Statistical Methodology)}, \textbf{76}, 217--42.
	

\bibitem[{Zou and Hastie(2005)}]{ZouHastie2005} Zou, H., \& Hastie, T. (2005). Regularization and variable selection via the elastic net. 
\newblock \emph{Journal of the Royal Statistical Society: Series B (Statistical Methodology)}, \textbf{67}, 301--20.


\end{thebibliography}
\end{document}